\newtheorem{theorem}{Theorem}[section]
\newtheorem{lemma}[theorem]{Lemma}
\newtheorem{corollary}[theorem]{Corollary}
\newtheorem{remark}[theorem]{Remark}
\theoremstyle{definition}
\newtheorem{definition}[theorem]{Definition}
\numberwithin{equation}{section}
\newcommand{\E}{{\mathbb{E}}}
\newcommand{\D}{{\mathbb{D}}}
\begin{document}

\title[Improved bounds for the total variation distance]
{Improved bounds for the total variation distance between stochastic polynomials}

\author{Egor Kosov}

\address{\noindent Egor Kosov:
Steklov Mathematical Institute of Russian Academy of Sciences, Moscow 119991, Russia
and
National Research University Higher School of Economics, Moscow, Russia \newline
Email: ked\_2006@mail.ru}

\author{Anastasia Zhukova}

\address{
Anastasia Zhukova:
Faculty of Mechanics and Mathematics, Lomonosov Moscow State University, Moscow, 119991 Russia}

\maketitle

\begin{abstract}
The paper studies upper bounds for the total variation distance
between two polynomials of a special form in random vectors satisfying the Doeblin-type condition.
Our approach is based on the recent results concerning Nikolskii--Besov-type smoothness of
distribution densities of polynomials in logarithmically concave random vectors.
The main results of the paper improve previously obtained estimates
of Nourdin--Poly and Bally--Caramellino.
\end{abstract}

\noindent
Keywords: Stochastic polynomial, invariance principle, logarithmically concave measure, total variation distance,
distribution of a polynomial

\noindent
AMS Subject Classification: primary, 60E15; secondary, 60F17, 60B10, 60E10

\section{Introduction}\label{sect-1}

Recall that the total variation distance $d_{\rm TV}(\xi, \eta)$
between random variables $\xi, \eta$ is defined by the equality
$$
d_{\rm TV}(\xi, \eta) :=
\sup\Bigl\{\mathbb{E}\bigl[\varphi(\xi) - \varphi(\eta)\bigr]
\colon \varphi\in C_0^\infty(\mathbb{R}),
\ \|\varphi\|_\infty \le1 \Bigr\}.
$$
In this paper we study upper bounds for the total variation distance
between the distributions of two polynomials of the form
\begin{align}\label{form}
&Q_{d, k_*}(a, X)\\
&=\sum_{m=0}^d  \sum_{n_1 < \ldots < n_m}   \sum_{\substack{k_1, \ldots, k_m = 1}}^{k_*}
\sum_{j_1, \ldots, j_m = 1}^{N}\!\!a_m((n_1, k_1, j_1), \ldots , (n_m, k_m, j_m))
\prod_{i=1}^m (X_{n_i, j_i} ^{k_i}\! - \E \bigl[X_{n_i, j_i} ^{k_i}\bigr]) \nonumber
\end{align}
where $a=(a_0, \ldots, a_d)$, $a_m\colon (\mathbb{N}^3)^m\to \mathbb{R}$,
and where
$X:=\{X_n\}_{n=1}^\infty$, $X_{n}:=(X_{n,1}, \ldots, X_{n, N})$,
is a sequence of $N$-dimensional independent
random vectors
satisfying the so called Doeblin’s condition, i.e.
the distribution of each $X_k$ is lower bounded by the Lebesgue measure
on some ball in $\mathbb{R}^N$ (see Definition \ref{D-Doebl} below).
Recently, upper bounds for different distances between distributions of polynomials
have been extensively studied
(see e.g. \cite{BC17}, \cite{BCP}, \cite{NPR10}, \cite{NNP}, \cite{NPec09},
\cite{NP13}, the book \cite{NPec12}, and
survey \cite{B22}).

The present paper continues the research started in \cite{MOO}, \cite{NP15} and \cite{BC19}.
The approach to obtaining upper bounds for the total variation distance
developed in the last two cited papers consists of two main technical steps.
Firstly, one notices that the distributions of the random vectors $X_j$, satisfying Doeblin’s condition,
coincide with the distributions of random vectors of the form
\begin{equation}\label{rep}
\varepsilon_j V_j + (1 - \varepsilon_j)U_j
\end{equation}
where $\varepsilon_j, V_j, U_j$ are independent and
$\varepsilon_j$ is a Bernoulli random variable,
$V_j$ is a random vector with sufficiently good density,
$U_j$ is some other random vector.
Next, working conditionally with respect to
$(\varepsilon, U)=(\varepsilon_1,\ldots, \varepsilon_n, U_1, \ldots, U_n)$,
one develops a Malliavin-type calculus with respect to the distribution of
the random vector
$V=(V_1, \ldots, V_n)$. This Malliavin-type calculus
is then used to obtain the desired upper bounds for the total variation distance.

The present paper has two main goals. The first one is to simplify the described technique
involved in the study of the total variation distance upper bounds.
In particular, instead of the Malliavin-type calculus,
we use some recent results concerning the smoothness properties of distributions of polynomials in
logarithmically concave random vectors
from \cite{Kos} (see also \cite{BKZ} and
\cite{Kos-Adv}).
The second goal of this paper
is
to provide sharper results compared to the main
theorems from
\cite{NP15} and \cite{BC19}.

Let $a=(a_0, \ldots, a_d)$ be a collection of coefficient functions
$a_m\colon (\mathbb{N}^3)^m\to \mathbb{R}$. Everywhere further through out the paper we always
assume that only a finitely many coefficients $a_m(n,k,j)$ are non zero.
Let
$$
[a_m]:=\Bigl( \sum_{n_1 < \ldots < n_m}   \sum_{k_1, \ldots, k_m = 1}^{k_*}
\sum_{j_1, \ldots, j_m = 1}^{N}
\bigl[a_m((n_1, k_1, j_1), \ldots , (n_m, k_m, j_m))\bigr]^2\Bigr)^{\frac{1}{2}},
$$
$$
[a_{l, d}]:=
\Bigl(\sum_{m=l}^d[a_m]^2\Bigr)^{\frac{1}{2}},\quad\quad\quad
[a]:=[a_{0, d}],
$$
$$
\delta(a_m): =\max_n
 \sum_{n_1 < \ldots < n_m}   \sum_{k_1, \ldots, k_m = 1}^{k_*}
\sum_{j_1, \ldots, j_m = 1}^{N}
I_{\{n\in\{n_1, \ldots, n_m\}\}}
\bigl[a_m((n_1, k_1, j_1), \ldots , (n_m, k_m, j_m))\bigr]^2,
$$
$$
\delta(a): = \max_n
\sum_{m=0}^d  \sum_{n_1 < \ldots < n_m}   \sum_{k_1, \ldots, k_m = 1}^{k_*}
\sum_{j_1, \ldots, j_m = 1}^{N}\!\!
I_{\{n\in\{n_1, \ldots, n_m\}\}}
\bigl[a_m((n_1, k_1, j_1), \ldots , (n_m, k_m, j_m))\bigr]^2.
$$
Our benchmark is the following
theorem from \cite{BC19}
(in the formulation of the theorem, $d_1$ is the Kantorovich--Rubinstein distance and $d_k$ is
its generalization, introduced below in Section \ref{sect-2}).
\begin{theorem}[see Theorem 3.3 in \cite{BC19}]\label{T-BC}
Let $A, B\in (0, +\infty)$,
and $k, d, d', l, l', N, k_*\in \mathbb{N}$, $l'<d', l<d$.
Assume that each element in sequences of independent random vectors
$\{X_n\}_{n=1}^\infty$ and $\{Y_n\}_{n=1}^\infty$
satisfy Doeblin-type condition and
$$
\sup\limits_{n}\max\limits_{1\le j\le N}\mathbb{E}\bigl[|X_{n, j}|^p\bigr]<\infty\quad  \sup\limits_{n}\max\limits_{1\le j\le N}\mathbb{E}\bigl[|Y_{n, j}|^p\bigr]<\infty\quad \forall p>1.
$$
Assume that two collections of
coefficient functions $a=(a_0, \ldots, a_d)$, $b=(b_0, \ldots, b_{d'})$ satisfy the following
assumptions:
$$
A\le [a_l], [b_{l'}];\quad [a], [b]\le B
$$
and $[a_{l+1, d}]$, $[b_{l'+1, d'}]$ are sufficiently small.
Then for each $\theta\in((1+k)^{-2},1)$
there are numbers $C, c > 0$
such that
\begin{multline}\label{BC-bound}
d_{\rm TV} \bigl(Q_{d, k_*}(a, X), Q_{d', k_*}(b, Y)\bigr)
\le C\Bigl[\bigl[d_k\bigl(Q_{d, k_*}(a, X), Q_{d', k_*}(b, Y)\bigr)\bigr]^{\frac{\theta}{1+ 2k\max\{l, l'\} k_*}}
+\\+ \exp\bigl(-c[\delta (a)]^{-1}\bigr) +
\exp\bigl(-c[\delta (b)]^{-1}\bigr)
+[a_{ l+1, d}]^{\frac{2\theta}{\max\{l, l'\}k_*}} +
[a_{l'+1, d'}]^{\frac{2\theta}{\max\{l, l'\}k_*}}\Bigr].
\end{multline}
\end{theorem}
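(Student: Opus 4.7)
The plan is to follow the two-step Bally--Caramellino strategy sketched in the introduction. First, invoke the Doeblin splitting to replace each $X_n$ (respectively $Y_n$) in law by $\varepsilon_n V_n + (1-\varepsilon_n)U_n$, where the $\varepsilon_n$ are independent Bernoulli's, the $V_n$ are independent random vectors having a smooth density supported on a ball, and the $U_n$ are independent auxiliary vectors, all three families being mutually independent. After this substitution, the polynomial $Q_{d,k_*}(a,X)$ becomes, conditionally on $(\varepsilon, U)$, a polynomial of the same form in the variables $V_n$ for those $n$ with $\varepsilon_n = 1$; the coefficients of this conditional polynomial are explicit polynomials in $(\varepsilon_m, U_m)_{m\neq n}$. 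All subsequent estimates are produced conditionally on $(\varepsilon, U)$ and then averaged.

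The second step is an interpolation between $d_{\rm TV}$ and $d_k$ via mollification. For $\varphi\in C_0^\infty(\mathbb{R})$ with $\|\varphi\|_\infty\le 1$, write $\varphi=\varphi*\rho_\delta + (\varphi-\varphi*\rho_\infty)$ for a smooth mollifier $\rho_\delta$. Iterating $k$ times, the mollified part $\varphi*\rho_\delta$ has controlled derivatives up to order $k$ and contributes $\delta^{-k}d_k(Q_{d,k_*}(a,X),Q_{d',k_*}(b,Y))$. The residual $\int|\varphi-\varphi*\rho_\delta|\,dF_{Q(a,X)}$ is bounded by $\delta^{s}$ times a Nikolskii--Besov seminorm of order $s$ of the density of $Q_{d,k_*}(a,X)$; the same holds with $Y$. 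Optimizing in $\delta$ yields an estimate of the form $d_{\rm TV}\lesssim (d_k)^{s/(s+k)}$ whenever the distributions of $Q_{d,k_*}(a,X)$ and $Q_{d',k_*}(b,Y)$ admit uniform Besov regularity of order $s$. Matching exponents forces $s$ to be essentially $\theta/(2\max\{l,l'\}k_*)$, which produces the $\theta/(1+2k\max\{l,l'\}k_*)$ power in \eqref{BC-bound}.

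It remains to establish the required Besov-type regularity of the density of $Q_{d,k_*}(a,X)$ on a large $(\varepsilon,U)$-set. Conditionally on $(\varepsilon,U)$ one runs the Malliavin calculus with respect to the smooth independent vectors $V_n$; the Malliavin covariance matrix of $Q_{d,k_*}(a,X)$ is quadratic in the coefficients of the leading chaos of order $l$, whose $\ell^2$-norm is bounded below by $A$, while the higher chaoses of order $>l$ appear as a perturbation of $\ell^2$-norm $[a_{l+1,d}]$. A Carbery--Wright / Paley--Zygmund-type anticoncentration argument then shows that the Malliavin covariance is non-degenerate except on a set of $(\varepsilon,U)$-probability $\exp(-c[\delta(a)]^{-1})$, because $\delta(a)$ controls the maximal ``influence'' of a single index $n$ on the polynomial and hence the variance/concentration of the covariance as a function of $(\varepsilon,U)$. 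Integration by parts then yields bounds on the conditional density and its fractional derivatives. The higher-order chaos terms are treated as an additive perturbation: their $L^2$-norm $[a_{l+1,d}]$ is converted into the bias contribution $[a_{l+1,d}]^{2\theta/(\max\{l,l'\}k_*)}$ by the same interpolation as before, with the density bounds on the leading part providing the necessary smoothness. The analogous argument for $Q_{d',k_*}(b,Y)$ gives the remaining two error terms.

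The principal obstacle, and the place where the implicit constants and the ``sufficiently small'' hypothesis on $[a_{l+1,d}],[b_{l'+1,d'}]$ are consumed, is the quantitative non-degeneracy of the conditional Malliavin covariance. One needs uniform Carbery--Wright estimates in the number of active indices and in the coefficient configuration, together with enough decoupling in $(\varepsilon,U)$ to translate the influence bound $\delta(a)$ into an exponential tail. The second delicate point is to make the interpolation uniform in the conditional density bounds, so that one may integrate out $(\varepsilon,U)$ without losing the exponent $\theta$; the use in the present paper of the Nikolskii--Besov regularity from \cite{Kos} in place of the full Malliavin apparatus is precisely the simplification that bypasses this second difficulty.
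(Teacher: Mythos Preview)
This theorem is not proved in the paper. It is quoted verbatim from \cite{BC19} (Theorem~3.3 there) as the benchmark that the paper sets out to improve; the paper's own contributions are Theorems~\ref{T-2} and~\ref{T-3}, which sharpen the exponent and drop the moment hypotheses. Consequently there is no ``paper's own proof'' to compare your proposal against.

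That said, your sketch is a fair high-level summary of the Bally--Caramellino argument as the introduction describes it: Nummelin splitting followed by a conditional Malliavin-type calculus in the $V$-variables, with the influence parameter $\delta(a)$ governing the probability of degeneracy of the Malliavin covariance. Your last paragraph even correctly identifies the simplification that the present paper makes, namely replacing the Malliavin machinery by the Nikolskii--Besov regularity estimates of \cite{Kos} and \cite{Kos-IMRN}. So you have understood the landscape, but you have written a proof outline for a result the paper merely cites; if the intent was to supply the missing proof, you would need to carry out the Malliavin estimates in full (as in \cite{BC19}), and if the intent was to reprove it by the methods of the present paper, you should instead be proving the stronger Theorem~\ref{T-3}, whose proof is given in Section~\ref{sect-4}.
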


The approach, employed in the present paper, allows us to obtain sharper estimates in Theorem \ref{T-BC}.
Moreover, this approach allows to simplify the most technical parts
of the proofs from \cite{NP15} and \cite{BC19} that are based on the Malliavin-type calculus.
One of our main theorems asserts the following.

\begin{theorem}[see Theorem \ref{T-2} below]
Let $A, B\in (0, +\infty)$,
and $k, d, d', N, k_*\in \mathbb{N}, d_* = \max (d, d'), \omega>0, r>0, R>0$.
Then there are numbers
$C:=C(A, B, k, d,d', N, k_*, \omega, r, R)$ and $c:=c(A, B, d,d', N, k_*, \omega, r)$, dependent
only on the listed parameters,
such that, for every pair of sequences
$\{X_n\}_{n=1}^\infty$ and $\{Y_n\}_{n=1}^\infty$
of independent $N$-dimensional random vectors satisfying Doeblin-type condition, and for every pair of
coefficient function collections
$a=(a_0, \ldots, a_d)$, $a_m\colon (\mathbb{N}^3)^m\to \mathbb{R}$,
$b=(b_0, \ldots, b_{d'})$, $b_m\colon (\mathbb{N}^3)^m\to \mathbb{R}$
such that
$$
A\le [a_d], [b_{d'}];\quad [a_d], [b_{d'}]\le B
$$
one has
\begin{multline*}
d_{\rm TV} \bigl(Q_{d, k_*}(a, X), Q_{d', k_*}(b, Y)\bigr)
\le C\bigl[d_{k}\bigl(Q_{d, k_*}(a, X), Q_{d', k_*}(b, Y)\bigr)\bigr] ^{\frac{1}{1+d_* kk_*}}
\\
+ 12d \exp\bigl(-c[\delta (a_d)]^{-1}\bigr) +
12d' \exp \bigl(-c[\delta (b_{d'})]^{-1}\bigr).
\end{multline*}
\end{theorem}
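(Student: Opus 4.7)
The plan is to combine the Bernoulli splitting of \cite{BC19} with the Nikolskii--Besov smoothness theorem of \cite{Kos} for polynomials in log-concave random vectors, replacing the Malliavin-type calculus used in the earlier arguments. The crucial improvement over Theorem~\ref{T-BC} is that the Kosov smoothness bound is controlled by the \emph{top} homogeneous part of the polynomial alone; this is what permits both the cleaner hypothesis $A\le[a_d]$ (in place of the intermediate $[a_l]$) and the cleaner exponent $1/(1+d_*kk_*)$.

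First I would invoke the Doeblin-type decomposition \eqref{rep} to write each $X_n$ as $\varepsilon_n V_n+(1-\varepsilon_n)U_n$ with $V_n$ distributed uniformly on a ball of radius $r$ (hence log-concave), and similarly for $Y_n$. Conditioning on the Bernoulli vector $\varepsilon$ and on $U$, the polynomial $Q_{d,k_*}(a,X)$ becomes a polynomial in the variables $\{V_{n,j}:\varepsilon_n=1\}$ of total degree at most $dk_*$, since each factor $X_{n_i,j_i}^{k_i}$ expands into a polynomial of degree $k_i\le k_*$ in $V_{n_i,j_i}$. Let $E_a$ be the bad event on which the portion of the squared top-degree norm $[a_d]^2$ supported on indices with $\varepsilon_n=1$ drops below $[a_d]^2/2$. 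Since the contribution of a single index is bounded by $\delta(a_d)$, a Bernstein/Hoeffding argument gives $\mathbb{P}(E_a)\le 12d\exp(-c[\delta(a_d)]^{-1})$, producing the first exponential term in the bound (and analogously for $b_{d'}$).

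On the complement of $E_a\cup E_b$, the top homogeneous part of the conditional polynomial in the $V$-variables has squared $L^2$-norm at least $A^2/2$. The results of \cite{Kos} then guarantee that the conditional density of $Q_{d,k_*}(a,X)$ given $(\varepsilon,U)$ lies in the Nikolskii--Besov class $B^{1/(d_*k_*)}_{1,\infty}$ with a norm controlled quantitatively in terms of $A,B,N,k_*,r,R$. Combining this with the standard interpolation inequality introduced in Section~\ref{sect-2},
\[
d_{\rm TV}(\xi,\eta)\le C\,\|p_\xi\|_{B^\alpha_{1,\infty}}^{k/(k+\alpha)}\,d_k(\xi,\eta)^{\alpha/(k+\alpha)},
\]
at $\alpha=1/(d_*k_*)$ yields precisely the exponent $1/(1+d_*kk_*)$. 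Averaging over $(\varepsilon,U)$ on the good event, using Jensen's inequality to pass the expectation through the concave power $d_k(\cdot,\cdot)^{\alpha/(k+\alpha)}$, and adding the exponential terms from $E_a$ and $E_b$ complete the proof.

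The principal obstacle is the Nikolskii--Besov estimate in Step~3: one must verify that the bound on $\|p_\xi\|_{B^\alpha_{1,\infty}}$ produced by \cite{Kos} depends only on the lower bound for the top homogeneous part of the conditional polynomial and on uniform moment bounds for its lower-degree coefficients. This isolation of the top part is exactly what fails for the Malliavin-type estimates used in \cite{BC19}, and it is the reason the earlier argument required the hypothesis $[a_l]\ge A$ together with smallness of $[a_{l+1,d}]$. Controlling the lower-degree coefficients of the conditional polynomial after the $(\varepsilon,U)$-conditioning is where the parameters $\omega,r,R$ enter through the moment assumptions on the $X_n$, but this is otherwise a routine application of Minkowski's inequality.
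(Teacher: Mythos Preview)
Your overall plan---Nummelin splitting, Kosov's Besov regularity for the conditional polynomial, and a concentration estimate for the bad event $E_a$---matches the paper's architecture, and the exponent bookkeeping is correct. But the averaging step hides a genuine gap. You propose to apply the interpolation inequality \emph{conditionally} on $(\varepsilon,U,\varepsilon',U')$ and then average; after Jensen this bounds the unconditional total variation (up to the exponential terms) by
\[
\Bigl(\mathbb{E}_{\varepsilon,U,\varepsilon',U'}\bigl[d_k\bigl(\mathrm{Law}(Q_{d,k_*}(a,X)\mid\varepsilon,U),\ \mathrm{Law}(Q_{d',k_*}(b,Y)\mid\varepsilon',U')\bigr)\bigr]\Bigr)^{\frac{1}{1+d_*kk_*}}.
\]
The expected \emph{conditional} $d_k$ distance, however, is not controlled by the unconditional $d_k$: convexity of the dual formulation only gives $d_k(\text{mixture},\text{mixture})\le\mathbb{E}[d_k(\text{components})]$, and you need the reverse. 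Concretely, if $Q_{d,k_*}(a,X)$ and $Q_{d',k_*}(b,Y)$ happen to be equal in law, the unconditional $d_k$ vanishes while the conditional $d_k$'s are typically positive because the conditional laws fluctuate with $(\varepsilon,U)$.

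The paper sidesteps this by inserting the Gaussian smoothing \emph{before} any conditioning: with $Z\sim\mathcal N(0,1)$ independent of everything and $\eta\in(0,1)$,
\[
d_{\rm TV}(\xi,\zeta)\le d_{\rm TV}(\xi,\xi+\eta Z)+d_{\rm TV}(\xi+\eta Z,\zeta+\eta Z)+d_{\rm TV}(\zeta+\eta Z,\zeta).
\]
The middle term is bounded by $C_k\eta^{-k}d_k(\xi,\zeta)$ \emph{unconditionally}, since convolving a bounded test function with the Gaussian of scale $\eta$ produces one whose first $k$ derivatives are bounded by $C_k\eta^{-k}$. Only the two outer terms require the Kosov estimate, and each involves a single polynomial, so conditioning on its own $(\varepsilon,U)$ is harmless there; Theorem~\ref{t2.1} together with Lemma~\ref{L-1} then gives $d_{\rm TV}(\xi,\xi+\eta Z)\le C[a_d]^{-1/(dk_*)}\eta^{1/(dk_*)}+12d\exp(-c[a_d]^2/\delta(a_d))$, and one optimises over $\eta$. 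Your ``standard interpolation inequality'' is the outcome of this optimisation, but it must be obtained at the unconditional level, not applied conditionally and averaged. As a minor side remark: no moment hypothesis on the $X_n$ is needed here---Theorem~\ref{t2.1} requires only the lower bound on $\mathbb{D}_V[Q_{d,k_*}(a,X)]$ supplied by Lemma~\ref{L-1}, so the last paragraph of your outline is addressing a nonexistent difficulty.
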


We point out, that in this theorem the power of the $d_k$ distance is almost twice bigger
than in Theorem \ref{T-BC}. Apart from that, the bound does not involve moments of random vectors $X_n, Y_n$.

The paper is organized as follows.
Section \ref{sect-2} contains all the necessary definitions, notation, and
several known preliminary theorems. In Section \ref{sect-3}
we consider the simplest case when $N = k_*=1$,
which corresponds to the results of \cite{NP15}.
In Section \ref{sect-4} we provide
bounds for the total variation distance
between general stochastic polynomials that
improve the estimate \eqref{BC-bound} from Theorem~\ref{T-BC}.
In Section \ref{sect-5} we provide bounds for the
Fourier transform of distributions of polynomials
of the form
\eqref{form}.

\section{Definitions, notation and preliminary facts}\label{sect-2}

Let $\xi$ and $\eta$ be two random variables.
The Kantorovich--Rubinstein distance $d_{\rm KR}(\xi, \eta)$
between
the distributions of random variables $\xi$ and $\eta$ is defined by the following equality:
$$
d_{\rm KR}(\xi, \eta):= \sup\biggl\{\mathbb{E}\bigl[\varphi(\xi)-\varphi(\eta)\bigr]\colon
\varphi\in C_0^\infty(\mathbb{R}),\ \|\varphi\|_\infty \le 1,\ \|\varphi'\|_\infty\le1\biggr\}.
$$
It is known (see, e.g \cite{B18}) that the convergence of distributions in the
Kantorovich--Rubinstein distance is equivalent to the convergence in distribution of random variables
 (i.e. the week convergence of these distributions).
We will also consider the following generalization of the Kantorovich--Rubinstein distance:
$$
d_k(\xi, \eta):=\sup\biggl\{\mathbb{E}\bigl[\varphi(\xi)-\varphi(\eta)\bigr]\colon
\varphi\in C_0^\infty(\mathbb{R}),\ \|\varphi\|_\infty \le 1,\ \|\varphi'\|_\infty\le1,\ldots,
\|\varphi^{(k)}\|_\infty\le1\biggr\}.
$$

Throughout the paper we use the standard notation for the expectation $\mathbb{E}\xi$
and for the variance $\mathbb{D}\xi := \mathbb{E}(\xi - \mathbb{E}\xi)^2$
of a random variable $\xi$.
For a function  $\varphi\in C_0^\infty(\mathbb{R})$,
we denote
$\|\varphi\|_\infty:=\max\limits_{t\in \mathbb{R}}|\varphi(t)|$.

We recall that a random vector $X$ is called logarithmically concave
if
$$
P\bigl(X\in (tA+(1-t)B)\bigr)\ge \bigl[P(X\in A)\bigr]^t\bigl[P(X\in B)\bigr]^{1-t}
\quad \forall t\in(0, 1)
$$
for every pair of Borel sets $A$ and $B$.
We point out that linear images of logarithmically concave random vectors
are also logarithmically concave random vectors.
Moreover, a random vector with independent logarithmically concave components
is a logarithmically concave random vector.
If a random vector $X$ has a density $\varrho$ with respect to the Lebesgue measure,
then it is logarithmically concave if and only if $-\ln\varrho$ is a convex function
(see \cite{BGVV} for more details concerning
logarithmically concave measures).

We now introduce the main assumption
(the so called Doeblin-type condition)
on random vectors studied in this paper.

\begin{definition}\label{D-Doebl}
{\rm
Let $N\in\mathbb{N}, \omega > 0, r>0, R>0$.
We say that the random vector $X$ in $\mathbb{R}^N$
satisfy the condition $\mathfrak{D}(\omega, r, R)$
if there is a point $x\in \mathbb{R}^N$ such that $|x|\le R$ and
for every Borel set $A \subset B_r(x)$ one has the estimate
$$
P (X \in A) \ge \omega \lambda (A)
$$
where $\lambda$ is the Lebesgue measure on $\mathbb{R}^N$ and where $B_r(x)$
is the Euclidean ball of radius $r$ centered at the point $x$.
}
\end{definition}

This Doeblin-type condition is a classical assumption to study convergence
in total variation of distributions. Similar assumptions in a weaker form appear
in the seminal paper of Yu. V. Prohorov \cite{P52}
where he studied the convergence in the CLT in the total variation distance.

\vskip .1in

We now formulate several already known results
that will be used throughout the paper.

\begin{theorem}[see \cite{Kos}]\label{t2.1}
Let $d\in \mathbb{N}$.
There is a number $C(d)>0$, dependent only on $d$, such that
for every logarithmically concave random vector
$V=(V_1, \ldots, V_N)$ in $\mathbb{R}^N$ and for every non-constant
polynomial
$$
f(V):= \sum_{j_1+\cdots+j_N\le d}a(j_1,\ldots, j_N)V_1^{j_1}\cdots V_N^{j_N}
$$
of a degree not greater than $d$,
and for every function
$\varphi \in C_b^{\infty}(\mathbb{R})$,
$\| \varphi \| _{\infty} \le 1$,
one has
$$
\mathbb{E}[\varphi ' (f(V))]
\le \frac{C(d)}{(\mathbb{D}[f(V)])^{\frac{1}{2d}}}\cdot \| \varphi' \|^{1 - \frac{1}{d}} _{\infty}
$$
and (equivalently)
$$
d_{\rm TV}\bigl(f(V) + h, f(V)\bigr)
\le \frac{C(d)}{(\mathbb{D}[f(V)])^{\frac{1}{2d}}}\cdot |h|^{\frac{1}{d}}\quad \forall h\in \mathbb{R}.
$$
\end{theorem}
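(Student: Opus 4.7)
The two displayed inequalities are equivalent expressions of a Nikolskii--Besov regularity of index $1/d$ for the density of $f(V)$: the second follows from the first by writing $\varphi(t+h)-\varphi(t) = \int_0^h \varphi'(t+s)\,ds$, taking the supremum over $\|\varphi\|_\infty \le 1$, and passing the $L^\infty$ bound to $\varphi'$; the converse is a standard Nikolskii duality. I focus on the first inequality and normalize $\mathbb{D}[f(V)] = 1$ by homogeneity.

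The plan is to combine an integration by parts against the density of $V$ with a Carbery--Wright-type small-ball bound on $|\nabla f(V)|$. The formal identity
$$\varphi'(f) = \nabla\bigl[\varphi(f)\bigr]\cdot \frac{\nabla f}{|\nabla f|^2},$$
valid where $\nabla f \ne 0$, suggests introducing a smooth cutoff $\chi_\varepsilon$ that vanishes on $\{|\nabla f|^2 \le \varepsilon\}$ and equals $1$ on $\{|\nabla f|^2 \ge 2\varepsilon\}$, and splitting
$$\mathbb{E}[\varphi'(f(V))] = \mathbb{E}[\varphi'(f(V))(1 - \chi_\varepsilon(V))] + \mathbb{E}[\varphi'(f(V))\chi_\varepsilon(V)].$$
The first summand is at most $\|\varphi'\|_\infty\, \mathbb{P}(|\nabla f(V)|^2 \le 2\varepsilon)$; the Carbery--Wright inequality applied to $|\nabla f|^2$ (a polynomial of degree $\le 2(d-1)$ in the log-concave vector $V$), together with a dimension-free reverse Poincar\'e bound $\mathbb{E}|\nabla f(V)|^2 \ge c_d\, \mathbb{D}[f(V)] = c_d$ for polynomials of degree $\le d$, bounds this by $C_d\,\varepsilon^{1/(2(d-1))}\|\varphi'\|_\infty$ with a constant depending only on $d$.

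For the second summand I would perform an integration by parts against the density $\rho$ of $V$, after a Gaussian regularization $V_\sigma := V + \sigma G$ ensuring that $\rho_\sigma$ is smooth and still log-concave, and passing $\sigma \to 0$ at the end. Writing $\Phi := \chi_\varepsilon\nabla f/|\nabla f|^2$, the identity reads
$$\mathbb{E}[\varphi'(f(V_\sigma))\chi_\varepsilon(V_\sigma)] = -\mathbb{E}\bigl[\varphi(f(V_\sigma))\bigl(\operatorname{div}\Phi + \Phi\cdot\nabla\log\rho_\sigma\bigr)(V_\sigma)\bigr],$$
and $\|\varphi\|_\infty \le 1$ reduces the task to $L^1$-controlling the divergence and the score contribution uniformly in $\sigma$. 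On the support of $\chi_\varepsilon$ one has $|\nabla f|^2 \ge \varepsilon$, so $\operatorname{div}\Phi$ is a rational expression whose denominator is bounded below by $\varepsilon^{O_d(1)}$ and whose numerator is polynomial in $V$; its $L^1$ norm is thus at most $C_d\,\varepsilon^{-\beta_d}$, via Khintchine-type moment bounds for polynomials in log-concave vectors.

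The main obstacle is the score term $\mathbb{E}|\Phi\cdot\nabla\log\rho|$, since no pointwise bound on the log-concave score is available in general. The way out I would pursue, following \cite{Kos}, is to avoid introducing the score explicitly by exploiting log-concavity in an integrated form: along the integral curves of the vector field $\nabla f/|\nabla f|$ the conditional density of $V$ is one-dimensional log-concave, and one-dimensional integration by parts against a log-concave density has sign-controlled boundary contributions that can be absorbed. After this dimension reduction the estimate depends only on polynomial moments of $V$, which in turn are controlled by $d$ alone through Khintchine-type inequalities and the normalization $\mathbb{D}[f(V)]=1$. Balancing the resulting bound $C_d\,\varepsilon^{-\beta_d}$ on the second summand against $C_d\,\|\varphi'\|_\infty\, \varepsilon^{1/(2(d-1))}$ on the first and optimizing over $\varepsilon$ produces exactly the exponent $1 - 1/d$ on $\|\varphi'\|_\infty$, with a dimension-free constant $C(d)$; the dimension-freeness of each ingredient is what ultimately explains the dimension-free conclusion.
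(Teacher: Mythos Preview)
The present paper does not prove Theorem~\ref{t2.1}; it is quoted from \cite{Kos} as a preliminary result and used as a black box throughout Sections~\ref{sect-3}--\ref{sect-5}. There is therefore no proof in this paper against which to compare your attempt.

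Evaluated on its own, your proposal has a genuine gap. The ``dimension-free reverse Poincar\'e bound'' $\mathbb{E}|\nabla f(V)|^2 \ge c_d\, \mathbb{D}[f(V)]$ that you need in order to apply Carbery--Wright to $|\nabla f|^2$ is false for general log-concave vectors. Take $N=1$, $V$ uniform on $[-R,R]$ and $f(x)=x$: then $\mathbb{E}|\nabla f|^2=1$ while $\mathbb{D}[f(V)]=R^2/3$, so the ratio tends to $0$ as $R\to\infty$. The theorem imposes no isotropy or scale normalization on $V$, and none can be hidden in $C(d)$, because the conclusion depends on $(V,f)$ only through the law of the scalar random variable $f(V)$, whereas $\mathbb{E}|\nabla f(V)|^2$ does not (replacing $V$ by $\lambda V$ and adjusting the coefficients of $f$ leaves the law of $f(V)$ unchanged but rescales the gradient). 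The same obstruction undermines the claimed dimension-free $L^1$ control of $\operatorname{div}\Phi$: this quantity involves second derivatives of $f$ and moments of $V$ that carry the scale of $V$, and Khintchine-type inequalities only compare different $L^p$ norms of a fixed polynomial to one another; they do not supply absolute bounds in terms of $d$ alone. Your final optimization therefore balances two terms whose constants are not controlled by $d$, and the argument does not close. Any successful proof must work with quantities invariant under reparametrization of $V$; the approach in \cite{Kos} does this by applying Carbery--Wright to $f$ itself and arguing on the real line with the pushed-forward distribution, rather than differentiating in the ambient space.
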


\begin{remark}\label{rm2.1}
{\rm
The result of Theorem \ref{t2.1}
actually means that the density
$\varrho_f$ of the random variable
$f(V)$ belongs to the Nikolskii--Besov space~$B_{1,\infty}^{1/d}(\mathbb{R})$
(see also \cite{Kos-FCAA}, \cite{Kos-MS} for a more detailed discussion of this phenomenon).
}
\end{remark}

We will also need the following generalization of
the previous theorem from \cite{Kos-IMRN}.

\begin{theorem}[see \cite{Kos-IMRN}]\label{t2.1.1}
Let $d\in \mathbb{N}$.
There is a number $C(d)>0$, dependent only on $d$, such that
for every logarithmically concave random vector
$V=(V_1, \ldots, V_N)$ in $\mathbb{R}^N$
and for every pair of
polynomials
$$
f(V):= \sum_{j_1+\cdots+j_N\le d}a(j_1,\ldots, j_N)V_1^{j_1}\cdots V_N^{j_N},\quad
g(V):= \sum_{j_1+\cdots+j_N\le d}b(j_1,\ldots, j_N)V_1^{j_1}\cdots V_N^{j_N}
$$
of degrees not greater than $d$, one has the bound
$$
(\mathbb{D}[f(V)])^{\frac{1}{2d}}\cdot
d_{\rm TV}\bigl(f(V), g(V)\bigr)
\le C(d)\cdot \bigl(\mathbb{E}|f(V) - g(V)|^2\bigr)^{1/2d}.
$$
\end{theorem}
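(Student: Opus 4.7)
My plan is to combine the Nikolskii--Besov regularity from Theorem~\ref{t2.1} with a mollification argument, after a preliminary reduction on the variance of $g(V)$.

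First, I reduce to the case $\mathbb{D}[g(V)] \geq \tfrac14 \mathbb{D}[f(V)]$. Indeed, the triangle inequality
$\sqrt{\mathbb{D}[f(V)]} \leq \sqrt{\mathbb{D}[g(V)]} + \sqrt{\mathbb{D}[f(V)-g(V)]} \leq \sqrt{\mathbb{D}[g(V)]} + (\mathbb{E}|f(V)-g(V)|^2)^{1/2}$
shows that if this assumption fails, then already $(\mathbb{D}[f(V)])^{1/(2d)} \leq C (\mathbb{E}|f(V)-g(V)|^2)^{1/(2d)}$, and the claim follows trivially from $d_{\rm TV} \leq 2$. Otherwise, Theorem~\ref{t2.1} applied to both $f(V)$ and $g(V)$ yields that $\varrho_f, \varrho_g \in B^{1/d}_{1,\infty}(\mathbb{R})$ with semi-norms at most $C/(\mathbb{D}[f(V)])^{1/(2d)}$, since the two variances are comparable.

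Next, for $\varphi \in C_0^\infty(\mathbb{R})$ with $\|\varphi\|_\infty \leq 1$ and a smooth mollifier $\psi_\varepsilon(x)=\varepsilon^{-1}\psi(x/\varepsilon)$, I set $\varphi_\varepsilon = \varphi * \psi_\varepsilon$ and decompose
\[
\mathbb{E}\varphi(f(V)) - \mathbb{E}\varphi(g(V)) = I_1 + I_2 + I_3,
\]
where $I_1, I_3$ are the errors of replacing $\varphi$ by $\varphi_\varepsilon$ under $f(V)$ and $g(V)$, respectively, and $I_2 = \mathbb{E}[\varphi_\varepsilon(f(V)) - \varphi_\varepsilon(g(V))]$. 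The Besov regularity from the first step controls $|I_1|, |I_3| \leq C\varepsilon^{1/d}/(\mathbb{D}[f(V)])^{1/(2d)}$. For $I_2$, writing $(\varrho_f * \psi_\varepsilon - \varrho_g * \psi_\varepsilon)(x) = \mathbb{E}[\psi_\varepsilon(x-f(V))-\psi_\varepsilon(x-g(V))]$, applying Fubini, and using the elementary estimate $\int|\psi_\varepsilon(y)-\psi_\varepsilon(y+h)|dy \leq |h|\,\|\psi_\varepsilon'\|_{L^1} \leq C|h|/\varepsilon$ gives $|I_2| \leq C (\mathbb{E}|f(V) - g(V)|^2)^{1/2}/\varepsilon$. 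Optimizing $\varepsilon$ then produces the desired estimate.

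The main obstacle is achieving the sharp exponent $1/(2d)$ on $\mathbb{E}|f(V)-g(V)|^2$: the naive optimization of the bounds above only delivers the weaker exponent $1/(d+1)$ on $(\mathbb{E}|f-g|^2)^{1/2}$, which is insufficient in the regime where $(\mathbb{E}|f-g|^2)^{1/2}$ is much smaller than $\sqrt{\mathbb{D}[f(V)]}$. The refinement must exploit that $f(V) - g(V)$ is itself a polynomial of degree at most $d$ in $V$, so that every convex combination $H_t = tf(V) + (1-t)g(V)$ satisfies Theorem~\ref{t2.1} with $\mathbb{D}[H_t]$ comparable to $\mathbb{D}[f(V)]$ in the regime under consideration. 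I would replace the crude Lipschitz bound for $I_2$ by the Taylor expansion $\varphi_\varepsilon(f)-\varphi_\varepsilon(g) = \int_0^1 \varphi_\varepsilon'(H_t)(f-g)\,dt$ followed by a Cauchy--Schwarz-type estimate, using Theorem~\ref{t2.1} applied to $H_t$ to control $\mathbb{E}[\varphi_\varepsilon'(H_t)^2]$ in terms of $(\mathbb{D}[H_t])^{-1/(2d)}$ and thereby trade the $L^2$ factor $(\mathbb{E}|f-g|^2)^{1/2}$ for $(\mathbb{E}|f-g|^2)^{1/(2d)}$. Optimizing $\varepsilon$ in the resulting bound then yields the claimed inequality.
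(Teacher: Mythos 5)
First, a point of orientation: the paper does not prove Theorem~\ref{t2.1.1} at all --- it is imported as a black box from \cite{Kos-IMRN}, so what you are really attempting is a reconstruction of the main theorem of that external paper. Your preliminary reductions are fine: the variance comparison $\sqrt{\mathbb{D}[f(V)]}\le\sqrt{\mathbb{D}[g(V)]}+(\mathbb{E}|f-g|^2)^{1/2}$ correctly disposes of the degenerate case, the control of $I_1,I_3$ by $C\varepsilon^{1/d}(\mathbb{D}[f(V)])^{-1/(2d)}$ follows from the shift estimate in Theorem~\ref{t2.1}, and the Lipschitz bound $|I_2|\le C\varepsilon^{-1}(\mathbb{E}|f-g|^2)^{1/2}$ is correct. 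You also correctly diagnose that optimizing these yields only the exponent $1/(d+1)$ rather than the claimed $1/d$ on $(\mathbb{E}|f-g|^2)^{1/2}$. Up to this point the argument is sound but proves a strictly weaker statement.

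The gap is in the final refinement, which is exactly the content of the theorem, and the mechanism you describe does not close it. Two concrete problems. (i) Cauchy--Schwarz applied to $\mathbb{E}[\varphi_\varepsilon'(H_t)(f-g)]$ produces the factor $(\mathbb{E}|f-g|^2)^{1/2}$ with exponent $1/2$, and no bound on the companion factor $(\mathbb{E}[\varphi_\varepsilon'(H_t)^2])^{1/2}$ can convert that into $(\mathbb{E}|f-g|^2)^{1/(2d)}$; more generally, any H\"older splitting of the product leaves $f-g$ in some $L^q$ norm, and all such norms are comparable to $\|f-g\|_{L^2}$ for polynomials of log-concave vectors, so the first power of $\|f-g\|_{L^2}$ is unavoidable along this route. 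The exponent $1/(2d)$ on $\mathbb{E}|f-g|^2$ can therefore only emerge from the optimization over $\varepsilon$ (or over an auxiliary truncation level), which would require a bound of the shape $|I_2|\le C\varepsilon^{-(d-1)/d}\|f-g\|_{L^2}\cdot(\mathbb{D}[f(V)])^{-1/(2d)}$ --- i.e.\ a \emph{sublinear} dependence on $\varepsilon^{-1}$ --- and you have not produced one. (ii) The proposed use of Theorem~\ref{t2.1} to control $\mathbb{E}[\varphi_\varepsilon'(H_t)^2]$ is not legitimate: that theorem bounds $\mathbb{E}[\psi'(H_t)]$ only for $\psi$ with $\|\psi\|_\infty\le1$, whereas $(\varphi_\varepsilon')^2$ is the derivative of $\Psi(x)=\int_{-\infty}^x(\varphi_\varepsilon'(s))^2\,ds$, whose sup-norm is the (unbounded, over the class $\|\varphi\|_\infty\le1$) quantity $\|\varphi_\varepsilon'\|_{L^2}^2$; the same obstruction blocks any attempt to bound $\mathbb{E}|\varphi_\varepsilon'(H_t)|$ rather than $\mathbb{E}[\varphi_\varepsilon'(H_t)]$, since an oscillating $\varphi$ of sup-norm $1$ can have arbitrarily large total variation. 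So the decisive step remains unproved; the actual argument of \cite{Kos-IMRN} requires a genuinely different input exploiting that $f-g$ is itself a degree-$d$ polynomial (beyond the mere equivalence of its moments), and your sketch does not supply it.
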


\begin{remark}\label{rm2.2}
{\rm
We note that from the proofs in \cite{Kos-IMRN}
actually follows that in the previous theorem,
one has the bound
$$
(\mathbb{D}[f(V)])^{\frac{1}{2d}}\cdot
d_{\rm TV}\bigl(f(V), g(V)\bigr)
\le C(d, d')\cdot
\bigl(\mathbb{E}|f(V) - g(V)|^2\bigr)^{1/2d}
$$
when
$$
f(V):= \sum_{j_1+\cdots+j_N\le d}a(j_1,\ldots, j_N)V_1^{j_1}\cdots V_N^{j_N},\quad
g(V):= \sum_{j_1+\cdots+j_N\le d'}b(j_1,\ldots, j_N)V_1^{j_1}\cdots V_N^{j_N},
$$
i.e. when $f$ is of a degree not greater than $d$ and
$g$ is of a degree not greater than $d'$.
}
\end{remark}

\begin{theorem}[see Lemma A.1 in \cite{BC19}]\label{t2.2}
Let
$$
\Phi(x) =
\sum_{1\le j_1<\ldots<j_d}c(j_1,\ldots, j_d)^2x_{j_1}\cdot\ldots\cdot x_{j_d},
$$
let
$$
[c]^2:=\sum\limits_{1\le j_1<\ldots<j_d}c(j_1,\ldots, j_d)^2,\quad
\delta(c):= \max_j
\sum\limits_{\substack{1\le j_1<\ldots<j_d\\j\in\{j_1, \ldots, j_d\}}}
c(j_1,\ldots, j_d)^2,
$$
and let $\varepsilon_n$, $n \in \mathbb{N}$, be a sequence of
independent Bernoulli random variables with values $\{0, 1\}$ such that
$P(\varepsilon_n=1)=p\in (0, 1)$.
Then for every
$$
\theta\in\bigl(0, \bigl(\tfrac{p}{2}\bigr)^d[c]^2\bigr),
$$
one has the estimate
$$
P\bigl(\Phi(\varepsilon)\le \theta\bigr)\le 6d\exp\Bigl(-\frac{\theta^2}
{\delta(c)[c]^2}\Bigr).
$$
\end{theorem}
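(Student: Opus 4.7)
The target is a small-ball (lower tail) concentration bound for the non-negative multilinear form $\Phi(\varepsilon)$ in independent Bernoulli random variables. My plan is to apply the one-sided Azuma--Hoeffding inequality, in the version that uses conditional ranges for the martingale increments rather than absolute bounds, to the Doob martingale of $\Phi$. Only finitely many coefficients $c(\cdot)$ are non-zero, so $\Phi$ depends on only finitely many $\varepsilon_n$'s and the construction below is well-defined.

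Introduce the filtration $\mathcal{F}_n := \sigma(\varepsilon_1, \ldots, \varepsilon_n)$ and the Doob martingale $M_n := \E[\Phi(\varepsilon) | \mathcal{F}_n]$, with $M_0 = \E\Phi = p^d[c]^2$ and $M_n = \Phi$ eventually. A direct computation using the multilinear structure of $\Phi$ gives $M_n - M_{n-1} = (\varepsilon_n - p) G_n$, where $G_n \ge 0$ is an $\mathcal{F}_{n-1}$-measurable random variable bounded almost surely by $[c]_n^2 := \sum_{\{j_1,\ldots,j_d\} \ni n} c(j_1,\ldots,j_d)^2 \le \delta(c)$. Conditionally on $\mathcal{F}_{n-1}$ the increment takes the two values $(1-p)G_n$ and $-pG_n$, hence lies in an interval of length $G_n \le [c]_n^2$.

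Hoeffding's lemma applied conditionally together with telescoping through the filtration then yields the one-sided Azuma--Hoeffding bound
$$
P\bigl(\Phi \le \E\Phi - t\bigr) \le \exp\Bigl(-\frac{2t^2}{\sum_n [c]_n^4}\Bigr).
$$
The denominator is controlled via the combinatorial identity $\sum_n [c]_n^2 = d[c]^2$ (each $d$-subset is counted once for each of its $d$ elements), giving $\sum_n [c]_n^4 \le \max_n [c]_n^2 \cdot \sum_n [c]_n^2 \le d\delta(c)[c]^2$. For $\theta \in (0, (p/2)^d[c]^2)$ I set $t := \E\Phi - \theta \ge p^d[c]^2(1 - 2^{-d}) \ge p^d[c]^2/2$ to conclude
$$
P(\Phi \le \theta) \le \exp\Bigl(-\frac{p^{2d}[c]^2}{2d\delta(c)}\Bigr).
$$
A short algebraic check, using $\theta^2/(\delta(c)[c]^2) \le p^{2d}[c]^2/(4^d\delta(c))$ together with the elementary inequality $4^d \ge 2d$ valid for all $d \ge 1$, shows that this bound is dominated by $6d\exp(-\theta^2/(\delta(c)[c]^2))$; the factor $6d$ is then a generous absorbing constant.

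The main technical point is extracting the sharp denominator bound $\sum_n [c]_n^4 \le d\delta(c)[c]^2$ in a way compatible with the martingale inequality. A naive Azuma--Hoeffding application with a universal absolute bound $|M_n - M_{n-1}| \le \delta(c)$ would produce an extraneous factor proportional to the size of the support of the coefficients, destroying the desired estimate. The key observations that enable the sharp bound are the $\mathcal{F}_{n-1}$-measurability of $G_n$, its pointwise domination by $[c]_n^2$, and the incidence identity $\sum_n [c]_n^2 = d[c]^2$; these together with the Bernoulli range $G_n$ of the conditional increment are exactly what is needed.
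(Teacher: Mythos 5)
Your proof is correct. Note that the paper itself gives no proof of this statement: it is imported verbatim as Lemma A.1 of \cite{BC19}, so there is no internal argument to compare against. Your route — the Doob martingale $M_n=\E[\Phi(\varepsilon)\mid\mathcal F_n]$, the factorization $M_n-M_{n-1}=(\varepsilon_n-p)G_n$ with $G_n$ predictable, nonnegative and dominated by $[c]_n^2$, the range form of Azuma--Hoeffding, and the incidence identity $\sum_n[c]_n^2=d[c]^2$ — is a clean, self-contained derivation, and it differs in flavor from the recursive/inductive conditioning argument used in \cite{BC19} (whose peeling over the $d$ levels is what produces the $6d$ prefactor there). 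Each step of yours checks out: the martingale increment computation is right; the conditional range is $G_n\le[c]_n^2$ with $[c]_n^2$ deterministic, which is exactly what the conditional Hoeffding lemma needs; $\sum_n[c]_n^4\le\delta(c)\cdot d[c]^2$; and with $t=\E\Phi-\theta\ge p^d[c]^2/2$ the resulting exponent $p^{2d}[c]^2/(2d\,\delta(c))$ dominates $\theta^2/(\delta(c)[c]^2)\le p^{2d}[c]^2/(4^d\delta(c))$ because $4^d\ge 2d$. In fact your argument proves the slightly stronger inequality $P(\Phi(\varepsilon)\le\theta)\le\exp(-\theta^2/(\delta(c)[c]^2))$ with no prefactor at all. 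You are also right to flag that the naive Azuma bound with the absolute increment bound $\delta(c)$ (or even with $|M_n-M_{n-1}|\le[c]_n^2$ in place of the range $G_n\le[c]_n^2$) loses a factor of $4$ in the exponent, which would break the case $d=1$; using the conditional range is essential to the constants.
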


We will make use of the so-called Nummelin’s splitting (see \cite{Num78})
already applied in \cite{BC19}
(see also \cite{BC16} and \cite{BCP18} where this splitting was also used).

\begin{theorem}[see the discussion before Lemma 3.1 and Lemma 3.2 in \cite{BC19}]\label{t2.3}
Let $\omega\!>\!0$, $r\!>\!0$, $R\!>\!0$,
$N, k_*\in\mathbb{N}$
and assume that the random vector $X_0$ in $\mathbb{R}^N$
satisfies the condition $\mathfrak{D}(\omega, r, R)$.
Then
\begin{equation}\label{rep2}
X_0 \stackrel{Law}{=} \varepsilon_0 V_0 + (1 - \varepsilon_0)U_0
\end{equation}
where $\varepsilon_0$, $V_0$, $U_0$ are independent,
$\varepsilon_0$ is a Bernoulli random variable
with the probability of success $p = p(\omega, r, N)$,
$V_0$ is a logarithmically concave random vector,
and $U_0$ is another random vector.
Moreover, there is a number $\alpha = \alpha(r, R, N, k_*)$, dependent only on
the parameters $r$, $R$, $N$ and $k_*$, such that,
for a sequence $\{X_n\}_{n=1}^\infty$  of independent
random vectors in $\mathbb{R}^N$,
each of which
satisfies the condition $\mathfrak{D}(\omega, r, R)$,
and for a collection of coefficients
$a=(a_1, \ldots, a_d)$, $a_m\colon (\mathbb{N}^3)^m\to \mathbb{R}$,
one has
$$
\E [|Q_{d, k_*}(a, V)|^2] \ge \alpha^d [a]^2
$$
where $V=(V_1, \ldots, V_n, \ldots)$
and $V_j$ is the random vector
from the representation \eqref{rep2}
for the random vector $X_j$.
\end{theorem}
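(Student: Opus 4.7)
The statement has two essentially independent parts and I will address them in turn.

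For the Nummelin-type splitting \eqref{rep2}, by the Doeblin condition there is a point $x$ with $|x| \le R$ such that $\mu_{X_0} \ge \omega \lambda|_{B_r(x)}$ as measures. I would set $p := \omega \lambda(B_r(x)) = \omega c_N r^N$, where $c_N$ is the volume of the unit ball in $\mathbb{R}^N$; this lies in $(0, 1]$ since $P(X_0 \in B_r(x)) \le 1$. Let $V_0$ be uniform on $B_r(x)$, which is log-concave as the uniform measure on a convex body, and define the distribution of $U_0$ by $\mu_{U_0} := (1 - p)^{-1}(\mu_{X_0} - p \mu_{V_0})$; this is a probability measure precisely by the Doeblin lower bound. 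Taking $\varepsilon_0$ Bernoulli with parameter $p$ and independent of $V_0, U_0$ then yields the decomposition with $p = p(\omega, r, N)$.

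For the $L^2$ lower bound, set $W_{n, k, j} := V_{n, j}^{k} - \E[V_{n, j}^{k}]$. The factors in $Q_{d, k_*}(a, V)$ are centered and the vectors $V_n$ with different $n$ are independent, so when expanding $\E|Q_{d, k_*}(a, V)|^2$ only the diagonal terms with $\{n_1, \ldots, n_m\} = \{n'_1, \ldots, n'_{m'}\}$ can be non-zero (otherwise some factor is centered and independent of everything else). Hence
\[
\E|Q_{d, k_*}(a, V)|^2 = \sum_{m=0}^d \sum_{n_1 < \ldots < n_m} \sum_{\vec k, \vec k', \vec j, \vec j'} a_m(\vec n, \vec k, \vec j)\, a_m(\vec n, \vec k', \vec j') \prod_{i=1}^m M_{n_i}\bigl[(k_i, j_i), (k'_i, j'_i)\bigr],
\]
where $M_n$ is the Gram matrix with entries $\mathrm{Cov}(V_{n, j}^k, V_{n, j'}^{k'})$ indexed by pairs $(k, j) \in \{1, \ldots, k_*\} \times \{1, \ldots, N\}$. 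The tensor-product inequality for positive semi-definite matrices then reduces matters to a uniform spectral lower bound $M_n \succeq \alpha \cdot I$ with $\alpha = \alpha(r, R, N, k_*) > 0$: once this holds, $\bigotimes_i M_{n_i} \succeq \alpha^m I$ and summing the resulting quadratic forms yields $\E|Q_{d, k_*}(a, V)|^2 \ge \sum_m \alpha^m [a_m]^2 \ge \alpha^d [a]^2$ after replacing $\alpha$ with $\min(\alpha, 1)$ if necessary.

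The main obstacle is this uniform spectral lower bound. Writing $V_n = y_n + r W$ with $W$ uniform on $B_1(0)$ and $|y_n| \le R$, the family $\{v_j^k : 1 \le k \le k_*,\ 1 \le j \le N\}$ consists of linearly independent polynomials and therefore remains linearly independent after centering as elements of $L^2$ with respect to the uniform law on any ball; this yields strict positive definiteness of $M_n$ at each fixed $y_n$. The entries of $M_n$ are polynomial functions of $y_n$, so the smallest eigenvalue depends continuously on $y_n$, and a compactness argument on the closed ball $\{|y_n| \le R\}$ produces a uniform positive lower bound $\alpha(r, R, N, k_*)$. It is precisely this compactness-plus-continuity step that incorporates all four parameters $r, R, N, k_*$ into the constant $\alpha$ and it is the non-trivial quantitative core of the proof.
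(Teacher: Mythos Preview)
The paper does not give its own proof of this statement; Theorem~\ref{t2.3} is quoted from \cite{BC19} (the discussion before Lemma~3.1 and Lemma~3.2 there) and placed among the preliminary facts in Section~\ref{sect-2}. So there is nothing to compare against directly.

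That said, your reconstruction is correct and follows the standard route one would expect from the cited source. The Nummelin splitting is exactly the usual one: take $V_0$ uniform on the Doeblin ball (log-concave), set $p=\omega\lambda(B_r(x))$, and let $U_0$ carry the normalized remainder $(1-p)^{-1}(\mu_{X_0}-p\,\mu_{V_0})$, which is a genuine probability measure by the Doeblin lower bound. For the $L^2$ estimate, your orthogonality argument across distinct blocks $n$ is right (independence plus centering kills off-diagonal terms), the identification of the inner sum as a quadratic form in $\bigotimes_i M_{n_i}$ is correct, and the tensor inequality $\bigotimes_i M_{n_i}\succeq\alpha^m I$ follows from $M_{n_i}\succeq\alpha I$. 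The only substantive point is the uniform bound $M_n\succeq\alpha(r,R,N,k_*) I$, and your linear-independence-plus-compactness argument handles it: the monomials $v_j^k$ ($1\le k\le k_*$, $1\le j\le N$) together with the constant $1$ are linearly independent polynomials, so their centered versions are linearly independent in $L^2$ of any ball, giving $M_n\succ0$; the entries of $M_n$ are polynomials in the center $x_n$, and compactness of $\{|x_n|\le R\}$ yields the uniform lower bound. One cosmetic point: the statement uses $a=(a_1,\ldots,a_d)$ (no constant term), so your sum should start at $m=1$, but this changes nothing in the argument.
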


\begin{theorem} [see Theorem 2.2 in \cite{BC19}]\label{t2.4}
Let $ \{ Z_n \}_{n=1} ^ \infty $ be a sequence of centered independent random vectors in
$\mathbb{R}^N$.
Let
\begin{equation}\label{eqM}
M_q(Z):= \max\bigl\{1, \sup\{[\mathbb{E} |Z_{n, i}|^q]^{1/q}\colon
i \in \{1, \ldots, N \}, n\in\mathbb{N}\bigr\}.
\end{equation}
Let $ \{ G_n \}_{n=1} ^ \infty $ be a sequence of centered independent Gaussian random vectors in
$ \mathbb{R} ^N$ such that
$\mathbb{E} [G_{n,i}G_{n,j}] = \mathbb{E} [Z_{n,i}Z_{n,j}]$. Then
there is a number
$C:=C\bigl(d, N, M_3(Z), M_3(G)\bigr)$,
dependent only on the listed parameters,
such that for every random variables
$$
S(c,Z) = \sum_{m=0}^d  \sum _{n_1 < \dots < n_m}
\sum_{j_1 = 1, \dots, j_m = 1}^N c_m((n_1, j_1), \ldots , (n_m, j_m))
\prod _{i=1} ^m Z_{n_i, j_i},$$
$$
S(c,G) = \sum_{m=0}^d  \sum _{n_1 < \dots < n_m}
\sum_{j_1 = 1, \dots, j_m = 1}^N c_m((n_1, j_1), \ldots , (n_m, j_m))
\prod _{i=1} ^m G_{n_i, j_i},
$$
one has
$$
d_3\bigl(S (c,Z), S (c,G)\bigr) \le
C [c]^2\sqrt{\delta (c).}$$
\end{theorem}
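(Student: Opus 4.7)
The plan is to apply the classical Lindeberg swapping method. Fix a test function $\varphi\in C_0^\infty(\mathbb{R})$ with $\|\varphi^{(k)}\|_\infty\le 1$ for $k=0,1,2,3$. For each integer $n\ge 0$ let
$$W_n := S(c, (Z_1, \ldots, Z_n, G_{n+1}, G_{n+2}, \ldots)),$$
so $W_0 = S(c, G)$ and, since only finitely many coefficients $c_m$ are nonzero, $W_n = S(c,Z)$ for all sufficiently large $n$. It therefore suffices to bound the telescoping sum $\sum_{n\ge 1}|\mathbb{E}[\varphi(W_n)-\varphi(W_{n-1})]|$. Because the summation in $S(c,\cdot)$ is taken over strictly ordered indices $n_1<\ldots<n_m$, each fixed $n$ appears in at most one factor of every monomial, so I may decompose
$$W_n = A_n + \sum_{j=1}^N B_{n,j}\,Z_{n,j},\qquad W_{n-1} = A_n + \sum_{j=1}^N B_{n,j}\,G_{n,j},$$
where $A_n$ and all the $B_{n,j}$ are polynomial functions of $(Z_i)_{i<n}$ and $(G_i)_{i>n}$ only, and hence are independent of $(Z_n, G_n)$; each $B_{n,j}$ has degree at most $d-1$.

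Next I perform a third-order Taylor expansion of $\varphi$ at $A_n$,
$$\varphi(A_n+\xi)=\varphi(A_n)+\varphi'(A_n)\xi+\tfrac12\varphi''(A_n)\xi^2+R(A_n,\xi),\qquad |R(A_n,\xi)|\le\tfrac16|\xi|^3,$$
apply it to $\xi=\xi_Z:=\sum_j B_{n,j}Z_{n,j}$ and to $\xi=\xi_G:=\sum_j B_{n,j}G_{n,j}$, subtract, and take expectations. Conditioning on $(A_n,B_{n,\cdot})$, the hypothesis $\mathbb{E} Z_{n,j}=\mathbb{E} G_{n,j}=0$ kills the first-order term, and the matched-covariance hypothesis $\mathbb{E}[Z_{n,i}Z_{n,j}]=\mathbb{E}[G_{n,i}G_{n,j}]$ forces $\mathbb{E}[\xi_Z^2\mid B_{n,\cdot}]=\mathbb{E}[\xi_G^2\mid B_{n,\cdot}]$, killing the second-order term as well. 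This leaves only the cubic remainder
$$\bigl|\mathbb{E}[\varphi(W_n)-\varphi(W_{n-1})]\bigr|\le \tfrac16\bigl(\mathbb{E}|\xi_Z|^3+\mathbb{E}|\xi_G|^3\bigr)\le C(N)\bigl(M_3(Z)^3+M_3(G)^3\bigr)\sum_{j=1}^N\mathbb{E}[|B_{n,j}|^3],$$
using $|\sum_j a_j|^3\le N^2\sum_j |a_j|^3$ together with the independence of $(Z_n,G_n)$ from $B_{n,\cdot}$.

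To convert $\sum_n\sum_j\mathbb{E}|B_{n,j}|^3$ into $[c]^2\sqrt{\delta(c)}$, I would invoke the hypercontractive ($L^3$--$L^2$) moment equivalence for a multilinear polynomial chaos of degree at most $d-1$ built from centered independent variables with uniformly bounded third moments: there exists $C=C(d,M_3(Z),M_3(G))$ with
$$\mathbb{E}[|B_{n,j}|^3]\le C\,\bigl(\mathbb{E}[|B_{n,j}|^2]\bigr)^{3/2}.$$
Evaluating the second moments by orthogonality of the multilinear chaos yields the two bookkeeping bounds
$$\sum_{n,j}\mathbb{E}[|B_{n,j}|^2]\le C'(d,M_3)\,[c]^2,\qquad \max_{n,j}\mathbb{E}[|B_{n,j}|^2]\le C'(d,M_3)\,\delta(c),$$
the first because each monomial of degree $m$ is counted exactly $m$ times in $\sum_{n,j}\mathbb{E}[|B_{n,j}|^2]$, the second directly from the definition of $\delta(c)$. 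Combining these through the elementary inequality $\sum_{n,j}x_{n,j}^{3/2}\le(\max_{n,j}x_{n,j})^{1/2}\sum_{n,j}x_{n,j}$ and summing in $n$ produces the target bound $C(d,N,M_3(Z),M_3(G))\,[c]^2\sqrt{\delta(c)}$.

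The hard step is exactly the hypercontractive $L^3$--$L^2$ comparison above: its constant must depend only on $d$ and on the third moments of the inputs, uniformly in the coefficient pattern and in the total number of underlying variables. Granting this (standard but non-trivial) moment inequality, the cancellation of the low-order Taylor terms, the combinatorial bookkeeping enforced by the $n_1<\ldots<n_m$ constraint, and the final H\"older-type summation are all routine; this $L^3$--$L^2$ moment equivalence provides the genuine analytic content and is what forces the constant $C$ in the statement to depend on $M_3(Z)$ and $M_3(G)$.
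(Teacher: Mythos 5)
Your overall strategy is the right one: the paper does not prove Theorem \ref{t2.4} at all (it is imported verbatim from Theorem 2.2 of \cite{BC19}), and the Lindeberg-swapping skeleton you describe --- the telescoping sequence $W_n$, the decomposition $W_n=A_n+\sum_j B_{n,j}Z_{n,j}$ made possible by the constraint $n_1<\dots<n_m$, the cancellation of the first- and second-order Taylor terms via centering and covariance matching, and the final H\"older-type summation against $\max_{n,j}$ and $\sum_{n,j}$ --- is exactly how such results are proved. The bookkeeping identities $\sum_{n,j}\mathbb{E}|B_{n,j}|^2\le C(d,M_3)[c]^2$ and $\max_{n,j}\mathbb{E}|B_{n,j}|^2\le C(d,M_3)\delta(c)$ are also correct.

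The gap is precisely in the step you single out as the analytic core. The hypercontractive comparison $\mathbb{E}|B_{n,j}|^3\le C(d,M_3)\bigl(\mathbb{E}|B_{n,j}|^2\bigr)^{3/2}$ is \emph{false} under the stated hypotheses, because the theorem imposes no lower bound on the variances of the $Z_{n,i}$. The $(2,3)$-hypercontractivity parameter of a centered variable $\xi$ is governed by the ratio $\|\xi\|_2/\|\xi\|_3$, which can be arbitrarily small even when $\|\xi\|_3\le M_3$: take a two-point distribution with a rare large value, so that $\mathbb{E}|\xi|^3=1$ but $\mathbb{E}|\xi|^2=\varepsilon^2$. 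Then for $B=\prod_{i=1}^{d-1}\xi_i$ one has $\mathbb{E}|B|^3=1$ while $\bigl(\mathbb{E}|B|^2\bigr)^{3/2}=\varepsilon^{3(d-1)}$, so your chain of inequalities would produce a vanishing bound for a Lindeberg remainder of order one. The repair is to route through coefficients rather than variances, which is natural since the target $[c]^2\sqrt{\delta(c)}$ involves raw coefficients: one needs the Rosenthal--Burkholder-type bound $\|B_{n,j}\|_3\le C(d,N,M_3)\,[c_{(n,j)}]$, where $[c_{(n,j)}]^2$ is the sum of the squared coefficients of the monomials containing the pair $(n,j)$; this is proved by induction on the degree using the martingale-difference structure in $n$ (and is the content of the moment lemma preceding Theorem 2.2 in \cite{BC19}). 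With it, $\sum_{n,j}[c_{(n,j)}]^3\le\sqrt{\delta(c)}\,\sum_{n,j}[c_{(n,j)}]^2\le d\,[c]^2\sqrt{\delta(c)}$, and the rest of your argument goes through unchanged.
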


\section{The case $N=k_*=1$}\label{sect-3}

In this section we consider the simplest situation
of multilinear polynomials.
To present the main result of this section
more explicitly, for $p\in(0, 1)$ and $\alpha>0$,
we consider the class
$\mathcal{C}(p, \alpha)$ of all random
variables $X$ such that
$$
X \stackrel{Law}{=}
\varepsilon(\alpha V+x_0) + (1-\varepsilon)U
$$
where $x_0\in \mathbb{R}$, and
where $\varepsilon$, $V$, $U$ are independent,
$V$
is uniformly distributed on $[-1, 1]$,
$\varepsilon$ is a Bernoulli random variable
with values
$\{0, 1\}$ and $P(\varepsilon=1)=p$,
$U$ is some other random variable.
It was noted in \cite{NP15} (see Proposition 1.6)
that for every random variable
$X\in \mathfrak{D}(\omega, r, R)$,
one also has
$X\in \mathcal{C}(2\omega r, r)$.
The following theorem is the main result
of this section.

\begin{theorem}\label{T-1}
Let $k, d \in \mathbb{N}$, $\alpha>0$, $p\in(0, 1)$, $\varkappa>0$.
Then there is a positive number $C:=C(k, d, \alpha, p, \varkappa)$ such that, for every $n\in \mathbb{N}$, for every pair of random vectors
$X=(X_1, \ldots, X_n)$, $Y=(Y_1, \ldots, Y_n)$
with independent components $X_j, Y_j\in \mathcal{C}(p, \alpha)$ and for
every pair of polynomials
$$
Q_{d}(a, X) =\!\!\! \sum_{1\le n_1< \ldots < n_d \le n}
\!\!\!a (n_1, \dots , n_d) X_{n_1} \ldots X_{n_d},
\ Q_{d}(b, Y) =\!\!\!
\sum _{1\le n_1< \ldots < n_d \le n}\!\!\! b(n_1, \dots , n_d) Y_{n_1} \ldots Y_{n_d},
$$
such that
$[a] \ge \varkappa$ and
$[b]\ge \varkappa$,
one has the estimate
\begin{multline*}
d_{\rm TV}\bigl(Q_{d}(a, X), Q_{d}(b, Y)\bigr) \\ \le
C\bigl[d_k\bigl(Q_{d}(a, X), Q_{d}(b, Y)\bigr)\bigr]^{\frac{1}{1+kd}}
+ 12d\exp\bigl(-\tfrac{p^{2d}[a]^2}{16^d\delta(a)}\bigr)
+ 12d\exp\bigl(-\tfrac{p^{2d}[b]^2}{16^d\delta(b)}\bigr)
\end{multline*}
where the constant $C>0$ is of the form
$$
C:=C(k, d, \alpha, p, \varkappa)=
4C_k + \tfrac{C(d)}{\alpha \sqrt{p}}\varkappa^{-\frac{1}{d}},\
C_k = \max\{c_0, c_1, \ldots, c_k\},\
c_k =
\tfrac{1}{\sqrt{2\pi}}\int_{\mathbb{R}}\Bigl|\frac{d^k}{ds^k}e^{-\frac{s^2}{2}}\Bigr|\, ds.
$$
\end{theorem}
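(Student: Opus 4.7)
The plan is to combine a Gaussian smoothing of the test function with the Nikolskii--Besov regularity of Theorem \ref{t2.1}, applied conditionally on Nummelin-style splitting variables. Using $X_j\in \mathcal{C}(p,\alpha)$ I write
$$X_j \stackrel{Law}{=} \varepsilon_j^X(\alpha V_j^X + x_j^X) + (1-\varepsilon_j^X)U_j^X,$$
with $V_j^X$ uniform on $[-1,1]$ and independent of $(\varepsilon^X, U^X)$, and analogously for $Y_j$. Conditional on $(\varepsilon^X, U^X)$, the multilinear polynomial $Q_d(a,X)$ reduces to a polynomial of degree $\le d$ in the log-concave random vector $(V_j^X)_{j\in J^X}$ with $J^X := \{j:\varepsilon_j^X=1\}$, which is exactly the scope of Theorem \ref{t2.1}. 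For the smoothing step, given $\varphi\in C_0^\infty(\mathbb{R})$ with $\|\varphi\|_\infty\le 1$, I convolve with the density $\gamma_\delta$ of $N(0,\delta^2)$ to obtain $\varphi_\delta := \varphi\ast \gamma_\delta$, which satisfies $\|\varphi_\delta\|_\infty\le 1$ and $\|\varphi_\delta^{(j)}\|_\infty\le c_j\delta^{-j}$. The three-term split
$$\mathbb{E}\bigl[(\varphi-\varphi_\delta)(Q_d(a,X))\bigr] + \mathbb{E}\bigl[\varphi_\delta(Q_d(a,X))-\varphi_\delta(Q_d(b,Y))\bigr] + \mathbb{E}\bigl[(\varphi_\delta-\varphi)(Q_d(b,Y))\bigr]$$
bounds the central term by $C_k\delta^{-k}\,d_k(Q_d(a,X), Q_d(b,Y))$ and, via Fubini, each outer term by $\int d_{\rm TV}\bigl(Q_d(a,X),Q_d(a,X)+y\bigr)\gamma_\delta(y)\,dy$, reducing the problem to controlling the translation sensitivity of the laws of $Q_d(a,X)$ and $Q_d(b,Y)$.

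Controlling these TV-shifts is the main content. Conditioning on $(\varepsilon^X, U^X)$, a Hoeffding/ANOVA decomposition of $Q_d(a,X)$ in the independent centred $V_j^X$ isolates the pure top-degree stratum
$$\sum_{\substack{n_1<\cdots<n_d\\\varepsilon^X_{n_1}=\cdots=\varepsilon^X_{n_d}=1}} a(n_1,\ldots,n_d)\,\alpha^d V_{n_1}^X\cdots V_{n_d}^X,$$
which is orthogonal to all strictly lower ANOVA strata (the strictly-increasing multi-index uniquely encodes its own active index set). Since $\mathbb{D}[V_j^X]=1/3$, this yields
$$\mathbb{D}\bigl[Q_d(a,X)\mid \varepsilon^X, U^X\bigr] \ge (\alpha^2/3)^d\,\Phi(\varepsilon^X), \qquad \Phi(\varepsilon):=\!\!\!\!\sum_{n_1<\cdots<n_d}\!\!\!\! a(n_1,\ldots,n_d)^2\,\varepsilon_{n_1}\cdots\varepsilon_{n_d}.$$
Theorem \ref{t2.2} applied with $c := |a|$ and $\theta=(p/4)^d[a]^2\in \bigl(0,(p/2)^d[a]^2\bigr)$ then gives $\Phi(\varepsilon^X)\ge (p/4)^d[a]^2$ off an event of probability at most $6d\exp\bigl(-p^{2d}[a]^2/(16^d\delta(a))\bigr)$. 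On the good event, Theorem \ref{t2.1} applied conditionally to $f(V)=Q_d(a,X)$ produces
$$d_{\rm TV}\bigl(Q_d(a,X)+y,\,Q_d(a,X)\mid\varepsilon^X,U^X\bigr) \le \frac{C(d)\sqrt{12}}{\alpha\sqrt{p}\,[a]^{1/d}}\,|y|^{1/d};$$
on the bad event I use the trivial bound $d_{\rm TV}\le 2$. Integrating against $\gamma_\delta$ using $\int |y|^{1/d}\gamma_\delta(y)\,dy \le \delta^{1/d}$, then summing the $X$ and $Y$ contributions and using $[a],[b]\ge\varkappa$, bounds the two outer terms by the two exponential terms in the theorem statement plus a contribution of order $\delta^{1/d}/(\alpha\sqrt{p}\varkappa^{1/d})$.

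Finally, setting $\delta := \bigl[d_k(Q_d(a,X), Q_d(b,Y))\bigr]^{d/(1+kd)}$ balances $\delta^{-k}d_k$ against $\delta^{1/d}$ at the common value $\bigl[d_k\bigr]^{1/(1+kd)}$, producing the claimed rate with a constant of the announced additive form $4C_k + C(d)/(\alpha\sqrt{p})\cdot\varkappa^{-1/d}$. The main obstacle is the conditional variance lower bound: one has to isolate exactly one orthogonal ANOVA stratum whose variance matches (up to the explicit factor $(\alpha^2/3)^d$) a Bernoulli polynomial of the form handled by Theorem \ref{t2.2}, and to do this robustly for every realisation of $(\varepsilon^X,U^X)$. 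Once this is in place, the remainder is a standard smoothing-and-optimization computation.
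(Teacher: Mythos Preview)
Your argument is correct and follows essentially the same route as the paper's proof: Gaussian smoothing to pass from $d_{\rm TV}$ to $d_k$, conditional application of Theorem~\ref{t2.1} via the splitting representation, Theorem~\ref{t2.2} for the small-variance event, and optimization in the smoothing parameter. The only cosmetic differences are that the paper phrases the smoothing as adding $\eta Z$ rather than convolving, cites \cite{NP15} for the conditional variance lower bound (obtaining an extra harmless factor $(d!)^2$) where you do the ANOVA computation directly, and that your use of $\delta$ for the smoothing scale clashes with the influence notation $\delta(a)$---you should rename it.
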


\begin{proof}
Let $\eta \in (0,1), Z \sim \mathcal{N}(0,1)$ and
assume that $Z$ is independent from $X$ and $Y$.
Then
\begin{multline*}
d_{\rm TV}\bigl(Q_{d}(a, X), Q_{d}(b, Y)\bigr)
\le
d_{\rm TV}\bigl(Q_{d}(a, X), Q_{d}(a, X)+\eta Z\bigr)
\\
+ d_{\rm TV}\bigl(Q_{d}(a, X)+\eta Z, Q_{d}(b, Y)+\eta Z\bigr)
+ d_{\rm TV}\bigl(Q_{d}(b, Y), Q_{d}(b, Y)+\eta Z\bigr)
\end{multline*}
and we now consider all the terms separately.

Let $\varphi\in C_0^\infty(\mathbb{R})$, $\|\varphi\|_\infty\le 1$.
Then
$$
\mathbb{E}\bigl[\varphi(Q_{d}(a, X)+\eta Z) - \varphi(Q_{d}(b, Y)+\eta Z)\bigr]
= \mathbb{E}\bigl[\varphi_\eta(Q_{ n}(a, X)) - \varphi_\eta(Q_{d}(b, Y))\bigr]
$$
where
$$
\varphi_\eta(t) = \int_{\mathbb{R}}\varphi(t+\eta s)
\tfrac{1}{\sqrt{2\pi}}e^{-\frac{s^2}{2}}\, ds.
$$
We note that
\begin{multline*}
\varphi_\eta^{(k)}(t) = \int_{\mathbb{R}}\varphi^{(k)}(t+\eta s)
\tfrac{1}{\sqrt{2\pi}}e^{-\frac{s^2}{2}}\, ds =
\eta^{-k}\int_{\mathbb{R}}\frac{d^k}{ds^k}\varphi(t+\eta s)
\tfrac{1}{\sqrt{2\pi}}e^{-\frac{s^2}{2}}\, ds
\\=
\eta^{-k} (-1)^k
\int_{\mathbb{R}}\varphi(t+\eta s)
\tfrac{1}{\sqrt{2\pi}}\frac{d^k}{ds^k}e^{-\frac{s^2}{2}}\, ds.
\end{multline*}
Thus,
$$
|\varphi_\eta^{(k)}(t)|\le c_k\eta^{-k},\quad
c_k =
\tfrac{1}{\sqrt{2\pi}}\int_{\mathbb{R}}\Bigl|\frac{d^k}{ds^k}e^{-\frac{s^2}{2}}\Bigr|\, ds.
$$
Since $\eta\in(0, 1)$, then
$$
d_{\rm TV}\bigl(Q_{ d}(a, X)+\eta Z, Q_{ d}(b, Y)+\eta Z\bigr)
\le C_k\eta^{-k}d_k\bigl(Q_{ d}(a, X), Q_{ d}(b, Y)\bigr)
$$
where $C_k = \max\{c_0, c_1, \ldots, c_k\}$.

We now estimate $d_{\rm TV}\bigl(Q_{ d}(a, X), Q_{ d}(a, X)+\eta Z\bigr)$.
We again consider $\varphi\in C_0^\infty(\mathbb{R})$, $\|\varphi\|_\infty\le 1$.
We recall that $X=(X_1, \ldots, X_n)$,
$X_j \stackrel{Law}{=}  \varepsilon_j(\alpha V_j+x_{0, j}) + (1-\varepsilon_j)U_j$,
and all the random variables $\varepsilon_j, U_j, V_j$ are mutually independent and
independent for all $j$.
We note that, for fixed
$x_0=(x_{0, 1}, \ldots, x_{0, n})$,
$\varepsilon=(\varepsilon_1,\ldots, \varepsilon_n)$ and $U=(U_1,\ldots, U_n)$,
the random variable $Q_{d}(a, X)$
is a polynomial of a degree not greater than $d$
in random variables $V = (V_1, \ldots, V_n)$.
We also point out that the vector $V$
has a logarithmically concave distribution.
Then, by Theorem \ref{t2.1},
for every $\theta>0$, there is an estimate
\begin{multline*}
\mathbb{E}\bigl[\varphi(Q_{d}(a, X)) - \varphi(Q_{d}(a, X)+\eta Z)\bigr]
\\=
\mathbb{E}_{\varepsilon, U, Z}\Bigl[
\bigl(I_{\{ \mathbb{D}_V[Q_{d}(a, X)] \le \theta \}}+ I_{\{ \mathbb{D}_V[Q_{d}(a, X)] > \theta\}}\bigr)
\mathbb{E}_V\bigl[\varphi(Q_{d}(a, X)) - \varphi(Q_{d}(a, X)+\eta Z))\bigr]\Bigr]
\\
\le 2P_{\varepsilon, U}\bigl(\mathbb{D}_V[Q_{d}(a, X)] \le \theta\bigr)+
C(d)\mathbb{E}_{\varepsilon, U, Z}
\Bigl[|\eta Z|^\frac{1}{d}\bigl(\D_V[Q_{ n}(a, X)]\bigr)^{-\frac{1}{2d}}I_{\{ \mathbb{D}_V[Q_{d}(a, X)] > \theta\}}\Bigr]
 \\ \le
2P_{\varepsilon, U}\bigl(\mathbb{D}_V[Q_{d}(a, X)] \le \theta\bigr) +
C(d)\eta^\frac{1}{d}\theta^{-\frac{1}{2d}}.
\end{multline*}
It remains to estimate the probability $P_{\varepsilon, V}\bigl(\mathbb{D}_U[Q_{ n}(a, X)] \le \theta\bigr)$.
We note that
(see the equalities $(3.12)$ and
the estimates after them
in \cite{NP15})
$$
\mathbb{D}_V[Q_{d}(a, X)] \ge
\alpha^{2d} 3^{-d}(d!)^2 \sum _{1\le j_1<\ldots<j_d \le n}
a^2 (j_1, \dots , j_d)
\varepsilon_{j_1} \ldots \varepsilon_{j_d}.
$$
Thus,
$$
P_{\varepsilon, U}\bigl(\mathbb{D}_V[Q_{ d}(a, X)] \le \theta\bigr)
\le
P\Bigl(\sum _{1\le j_1<\ldots<j_d \le n}
a^2 (j_1, \dots , j_d) \varepsilon _{j_1} \ldots \varepsilon _{j_d}
\le 3^d\alpha^{-2d}(d!)^{-2}\theta\Bigr).
$$
We take
$$
\theta = (\tfrac{\alpha^2 p}{12})^d(d!)^2[a]^2\ge (\tfrac{\alpha^2 p}{12})^d(d!)^2\varkappa^2.
$$
Then $3^d\alpha^{-2d}(d!)^{-2}\theta = (\frac{p}{4})^d[a]^2$ and by Theorem~ \ref{t2.2}
the following inequality holds
$$
P\Bigl(\sum _{1\le j_1<\ldots<j_d \le n}
a^2 (j_1, \dots , j_d) \varepsilon _{j_1} \ldots \varepsilon _{j_d}
\le 3^d\alpha^{-2d}(d!)^{-2}\theta\Bigr)
\le 6d\exp\Bigl(-\frac{p^{2d}[a]^2}{16^d\delta(a)}\Bigr).
$$
Therefore,
$$
d_{\rm TV}\bigl(Q_{d}(a, X), Q_{d}(a, X)+\eta Z\bigr)\le
12d\exp\Bigl(-\frac{p^{2d}[a]^2}{16^d\delta(a)}\Bigr) +
C(d)\eta^\frac{1}{d} \tfrac{\sqrt{12}}{\alpha \sqrt{p}}
(d!)^{-\frac{1}{d}}
\varkappa^{-\frac{1}{d}}.
$$
Similarly, one has
$$
d_{\rm TV}\bigl(Q_{d}(b, Y), Q_{d}(b, Y)+\eta Z\bigr)
\le
12d\exp\Bigl(-\frac{p^{2d}[b]^2}{16^d\delta(b)}\Bigr) +
C(d)\eta^\frac{1}{d} \tfrac{\sqrt{12}}{\alpha \sqrt{p}}
(d!)^{-\frac{1}{d}}
\varkappa^{-\frac{1}{d}}.
$$
Summing up all the estimates
established above,
we get that
\begin{multline*}
d_{\rm TV}\bigl(Q_{ d}(a, X), Q_{ d}(b, Y)\bigr) \\ \le
C_k\eta^{-k}d_k\bigl(Q_{ d}(a, X), Q_{ d}(b, Y)\bigr)
+ 2C(d)\eta^\frac{1}{d} \tfrac{\sqrt{12}}{\alpha \sqrt{p}}(d!)^{-\frac{1}{d}}\varkappa^{-\frac{1}{d}}
\\+ 12d\exp\Bigl(-\frac{p^{2d}[a]^2}{16^d\delta(a)}\Bigr)
+ 12d\exp\Bigl(-\frac{p^{2d}[b]^2}{16^d\delta(b)}\Bigr).
\end{multline*}
We now take
$\eta =\bigl( \frac{1}{4}d_k(Q_{ d}(a, X), Q_{ d}(b, Y))\bigr)^{\frac{d}{dk+1}}<1$. Then
\begin{multline*}
d_{\rm TV}\bigl(Q_{ d}(a, X), Q_{ d}(b, Y)\bigr)
\\\le
\bigl(4^{\frac{dk}{dk+1}}C_k+ 2C(d)4^{-\frac{1}{dk+1}}(d!)^{-\frac{1}{d}}\tfrac{\sqrt{12}}{\alpha \sqrt{p}}\varkappa^{-\frac{1}{d}}\bigr)
\bigl[d_k\bigl(Q_{ d}(a, X), Q_{ d}(b, Y)\bigr)\bigr]^{\frac{1}{1+kd}}
\\ + 12d\exp\Bigl(-\frac{p^{2d}[a]^2}{16^d\delta(a)}\Bigr)
+ 12d\exp\Bigl(-\frac{p^{2d}[b]^2}{16^d\delta(b)}\Bigr).
\end{multline*}
The theorem is proved.
\end{proof}

\section{Bounds for the total variation distance: general case}\label{sect-4}

We now move on to the case of general polynomials
of the form \eqref{form}. We start with the following techical lemma,
which will be used further in this section.

\begin{lemma}\label{L-1}
Let $d, N, k_*\in \mathbb{N}$, $\omega>0$, $r>0$, $R>0$.
Then there are positive numbers
$c:=c(d,N, \omega, r)$ and
$C:=C(d, N, k_*, \omega, r, R)$, dependent
only on the listed parameters,
such that, for every sequence
$\{X_n\}_{n=1}^\infty$
of independent $N$-dimensional random vectors satisfying the condition
$\mathfrak{D}(\omega, r, R)$, for every
coefficient function collection
$a=(a_0, \ldots, a_d)$, $a_m\colon (\mathbb{N}^3)^m\to \mathbb{R}$,
one has the estimate
$$
P_{\varepsilon, U} \bigl(\mathbb{D}_V \bigl[Q_{d, k_*}(a, X)\bigr] < C[a_d]^2\bigr) \le
6d \exp \Bigl(-\frac{  c [a_d]^2 }{\delta (a_d)} \Bigr)
$$
where $\varepsilon=(\varepsilon_n)_{n=1}^\infty$,
$U=(U_n)_{n=1}^\infty$, $V=(V_n)_{n=1}^\infty$
are collections of independent random variables
from the decomposition \eqref{rep2}
in Theorem \ref{t2.3} and $Q_{d, k_*}(a, X)$ is a polynomial
of the form \eqref{form}.
\end{lemma}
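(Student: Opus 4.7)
The plan is to reproduce the strategy of Theorem~\ref{T-1}: establish a deterministic (in $\varepsilon,U$) lower bound
\[
\mathbb{D}_V\bigl[Q_{d,k_*}(a,X)\bigr] \ \ge\ \alpha^{d}\!\!\sum_{n_1<\ldots<n_d}\!\! c(n_1,\ldots,n_d)^2\,\varepsilon_{n_1}\cdots\varepsilon_{n_d},
\]
with coefficients $c(n_1,\ldots,n_d)^2 := \sum_{k_1,\ldots,k_d=1}^{k_*}\sum_{j_1,\ldots,j_d=1}^{N}a_d((n_1,k_1,j_1),\ldots,(n_d,k_d,j_d))^2$, so that $[c]^2=[a_d]^2$ and $\delta(c)=\delta(a_d)$; then apply Theorem~\ref{t2.2} to this polynomial in $\varepsilon$ with $\theta=(p/4)^d[a_d]^2$ to produce the tail bound.

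To prove the deterministic lower bound I would condition on $(\varepsilon,U)$, set $S=\{n:\varepsilon_n=1\}$, and decompose the polynomial $P(V):=Q_{d,k_*}(a,X)$ orthogonally with respect to the independent components $\{V_n\}$ (the Hoeffding/ANOVA decomposition):
\[
\mathbb{D}_V\bigl[P(V)\bigr] \ = \ \sum_{A\ne\emptyset}\E_V\bigl[f_A(V_A)^2\bigr], \qquad f_A\in \bigotimes_{n\in A}L_0^2(V_n).
\]
Restricting to $|A|=d$, the key observations are: (i) $f_A=0$ whenever $A\not\subseteq S$, since for $n\notin S$ each factor $X_{n,j}^k-\E[X_{n,j}^k]=U_{n,j}^k-\E[X_{n,j}^k]$ is a constant in $V$; and (ii) summands with $m<d$ involve fewer than $d$ distinct indices and therefore belong to ANOVA levels of order strictly less than $d$. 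Writing $V_{n_i,j_i}^{k_i}-\E[X_{n_i,j_i}^{k_i}] = \tilde V_{n_i,j_i}^{k_i}+\mu_{n_i,j_i}^{k_i}$, with $\tilde V_{n,j}^k := V_{n,j}^k - \E[V_{n,j}^k]$ and $\mu_{n,j}^k$ a constant, the projection onto $\bigotimes_{n\in A}L_0^2(V_n)$ for $A=\{n_1<\ldots<n_d\}\subseteq S$ keeps only the fully-centered part:
\[
f_A \ = \ \sum_{k,j}a_d(A;k,j)\prod_{i=1}^{d}\tilde V_{n_i,j_i}^{k_i}.
\]

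Consequently $\E_V[f_A^2]$ equals a quadratic form in $a_d(A;\cdot)$ whose matrix is the tensor product $\Sigma_{n_1}\otimes\cdots\otimes\Sigma_{n_d}$, where $\Sigma_n((k,j),(k',j')):=\mathrm{Cov}(V_{n,j}^k,V_{n,j'}^{k'})$. Applying Theorem~\ref{t2.3} in its degree-one case to an arbitrary linear polynomial $\sum_n\sum_{k,j}b(n;k,j)\tilde V_{n,j}^k$ (for which the cross terms vanish by independence of the $V_n$'s) forces $\lambda_{\min}(\Sigma_n)\ge \alpha$ uniformly in $n$, with the same $\alpha=\alpha(r,R,N,k_*)$ provided by Theorem~\ref{t2.3}. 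Hence $\lambda_{\min}(\Sigma_{n_1}\otimes\cdots\otimes\Sigma_{n_d})\ge\alpha^d$ and $\E_V[f_A^2]\ge\alpha^d c(A)^2$. Summing over $A\subseteq S$ with $|A|=d$ yields the announced deterministic lower bound, and Theorem~\ref{t2.2} with $\theta=(p/4)^d[a_d]^2$ then produces the probability estimate with $C=\alpha^d(p/4)^d$ (depending on $d,N,k_*,\omega,r,R$ through $\alpha$ and $p=p(\omega,r,N)$) and $c=(p/4)^{2d}$ (depending on $d,N,\omega,r$).

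The main delicacy I anticipate is the ANOVA bookkeeping in the second paragraph: one centers in the definition of $Q$ by $\E[X_{n,j}^k]$ rather than by $\E[V_{n,j}^k]$, so the substitution $X_{n,j}=\varepsilon_n V_{n,j}+(1-\varepsilon_n)U_{n,j}$ generates within each factor both a $V$-mean-zero part $\tilde V_{n,j}^k$ and a nonzero constant $\mu_{n,j}^k$. One must track these carefully to confirm that only the fully-centered tensor survives the top ANOVA projection and that the lower-order ($m<d$) summands of $Q$ contribute nothing to $f_A$ with $|A|=d$. Once this combinatorial step is settled, the remainder is a clean application of Theorems~\ref{t2.3} (in the degree-one case) and \ref{t2.2}.
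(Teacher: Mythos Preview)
Your proposal is correct and follows essentially the same strategy as the paper: both isolate the top-degree, fully $V$-centered part of $Q_{d,k_*}(a,X)$ (the paper calls it $S_1$), show it is $\E_V$-orthogonal to the remainder so that $\mathbb{D}_V[Q_{d,k_*}(a,X)]\ge\E_V[S_1^2]$, and then apply Theorems~\ref{t2.3} and~\ref{t2.2}. The only difference is packaging: the paper writes the decomposition $Q=S_1+S_2+S_3$ explicitly and invokes Theorem~\ref{t2.3} directly in degree $d$ to obtain $\E_V[S_1^2]\ge\alpha^d S_0$, whereas you frame the orthogonality via the Hoeffding/ANOVA decomposition and re-derive the $\alpha^d$ bound from the degree-one case of Theorem~\ref{t2.3} through the tensor product $\Sigma_{n_1}\otimes\cdots\otimes\Sigma_{n_d}$---a small detour the paper avoids by citing the degree-$d$ statement directly.
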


\begin{proof}
Let
$$
X_n \stackrel{Law}{=} \varepsilon_n V_n + (1 - \varepsilon_n)U_n
$$
be a decomposition from Theorem \ref{t2.3}
and
$\varepsilon_n$, $V_n$, $U_n$ are mutually independent.
We note that
\begin{multline*}
Q_{d, k_*}(a, X)
=
\sum_{m=0}^d  \sum _{n_1 < \ldots < n_m}
\sum_{k_1, \ldots, k_m = 1} ^{k_*}\sum_{j_1, \ldots, j_m = 1} ^{N}
a_m((n_1, k_1, j_1), \ldots , (n_m, k_m, j_m))
\\
\times \prod _{i=1}^m
(\varepsilon _{n_i} (V _{n_i, j_i}^{k_i} - \mathbb{E}\bigl[V _{n_i, j_i}^{k_i}\bigr])
+ \bar U_{n_i, k_i, j_i})
\end{multline*}
where
$\bar U_{n, k, j} =
(1 - \varepsilon _{n} ) U_{n, j}^{k} + \varepsilon_{n} \mathbb{E}[V_{n, j}^{k}]
- \mathbb{E} [X_{n, j} ^{k}]$.
Expanding the brackets we get
$$
Q_{d, k_*}(a, X)
=
S_{1}(\varepsilon, V) +
S_{2} (\bar U) + S_{3}
$$
where
$$
S_{1} (\varepsilon, V)
= \sum _{n_1 < \dots < n_d}
\sum_{k_1, \ldots, k_d = 1}^{k_*}
\sum_{j_1, \ldots, j_d = 1}^{N}
a_d((n_1, k_1, j_1), \ldots , (n_d, k_d, j_d))
\prod _{i=1}^d \varepsilon _{n_i}
(V _{n_i, j_i}^{k_i} - \mathbb{E}\bigl[V _{n_i, j_i}^{k_i}\bigr]),
$$
$$
S_{2} (\bar U) =
\sum_{m=0}^d  \sum _{n_1 < \ldots < n_m}
\sum_{k_1, \ldots, k_m = 1} ^{k_*}\sum_{j_1, \ldots, j_m = 1} ^{N}
a_m((n_1, k_1, j_1), \ldots , (n_m, k_m, j_m))
\prod _{i=1} ^m \bar U_{n_i, k_i, j_i},
$$
$$
S_{3} = Q_{d, k_*}(a, X) - S_{1} (\varepsilon, V)
- S_{2} (\bar U).
$$
We now estimate $\mathbb{D}_V \bigl[Q_{d, k_*}(a, X)\bigr]$.
We note that
$\mathbb{E}_V\bigl[S_{1} (\varepsilon, V)\bigr] = \mathbb{E}_V \bigl[S_{3} \bigr]= 0$.
Since $S_{2}(\bar U)$ does not depend on the variables
$V_{n_i, j_i}^{k_i}$, then
$$
\mathbb{D}_V \bigl[Q_{d, k_*}(a, X)\bigr]=
\mathbb{E}_V  \bigl[(S_{1}(\varepsilon, V) + S_{3})^2\bigr]=
\mathbb{E}_V \bigl[S_{1}^2 (\varepsilon, V) \bigr] +
\mathbb{E}_V \bigl[S_{3} ^2\bigr] +
2\mathbb{E}_V\bigl[S_{1} (\varepsilon, V)  S_{3}\bigr].
$$
We now note that the factors of the form
$(V _{n, j}^{k} - \mathbb{E}[V _{n, j}^{k}])$
appear in each summand
of $S_3$ less than $d$ times.
Thus,
$\mathbb{E}_V\bigl[ S_{1} (\varepsilon, V) S_{3}\bigr]  = 0$
since the factors of this form are included in the sum
$S_{1} (\varepsilon, V)$ exactly
$d$ times with different indexes $n$.
Therefore,
$$
\mathbb{D}_V \bigl[Q_{d, k_*}(a, X)\bigr] =
\mathbb{E}_V \bigl[S_{1} ^2 (\varepsilon, V)\bigr] + \E_V \bigl[S_{3} ^2\bigr] \ge
\mathbb{E}_V \bigl[S_{1} ^2 (\varepsilon, V)\bigr].
$$
Theorem \ref{t2.3} implies that
\begin{equation}\label{eq4.1}
\mathbb{E} _V \bigl[S_{1} ^2 (\varepsilon, V) \bigr]
\ge \alpha^d S_0
\end{equation}
where $\alpha=\alpha(r, R, N, k_*)$ and where
$$
S_0 = S_0 (a, \varepsilon) =
\sum _{n_1 < \dots < n_d}
\sum_{k_1, \ldots, k_d = 1} ^{k_*}
\sum_{j_1, \ldots, j_d = 1} ^{N}
a_d^2((n_1, k_1, j_1), \ldots , (n_d, k_d, j_d))
\prod _{i=1}^d\varepsilon^2_{n_i}.
$$
By Theorem \ref{t2.2}, for
$\theta\in\bigl(0, \alpha ^d \bigl(\tfrac{p}{2}\bigr)^d[a_d]^2  \bigr)$,
one has
\begin{multline*}
P_{\varepsilon}
(\alpha^d S_0< \theta)
\\
= P_{\varepsilon} \Bigl(\sum _{n_1 < \dots < n_d}
\sum_{k_1, \ldots, k_d = 1} ^{k_*}
\sum_{j_1, \ldots, j_d = 1} ^{N}
a_d^2((n_1, k_1, j_1), \ldots , (n_d, k_d, j_d))
\prod _{i=1}^d\varepsilon_{n_i}^2 < \frac{\theta}{\alpha^d} \Bigr)  \\
= P_{\varepsilon} \Bigl(\sum _{n_1 < \dots < n_d}
\sum_{k_1, \ldots, k_d = 1} ^{k_*}
\sum_{j_1, \ldots, j_d = 1} ^{N}
a_d^2((n_1, k_1, j_1), \ldots , (n_d, k_d, j_d))
\prod _{i=1}^d\varepsilon_{n_i} < \frac{\theta}{\alpha^d} \Bigr)  \\
\le 6d
\exp \Bigl(-\frac{\theta ^2}{\alpha^{2d}\delta (a_d) [a_d]^2} \Bigr).
\end{multline*}
Substituting
$\ \theta  = \frac{1}{2}\alpha^d  (\frac{p}{2})^{d}[a_d]^2$
we get the estimate
$$
P_{\varepsilon}
(\alpha^d S_0< \theta)
 \le
6d \exp \Bigl(-\frac{\theta ^2}{\alpha^{2d}  \delta (a_d) [a_d]^2} \Bigr)
=
6d \exp \Bigl(-\frac{  (\frac{p}{2})^{2d}[a_d]^2 }{4 \delta (a_d)} \Bigr)
=
6d \exp \Bigl(-\frac{  c [a_d]^2 }{\delta (a_d)} \Bigr)
$$
where
$c =\frac{1}{4}  \bigl(\frac{p}{2}\bigr)^{2d} $.
Hence,
$$
P_{\varepsilon, U} \bigl(\mathbb{D}_V \bigl[Q_{d, k_*}(a, X)\bigr] < C[a_d]^2\bigr) \le
P_{\varepsilon, U}
\bigl(\mathbb{E}_V \bigl[(S_{1} (\varepsilon, V))^2 \bigr] < C[a_d]^2\bigr) \le 6d \exp \Bigl(-\frac{  c [a_d]^2 }{\delta (a_d)} \Bigr)
$$
where
$C = \frac{1}{2} \alpha^d (\frac{p}{2})^{d}.$
The lemma is proved.
\end{proof}

We move on to the first main result of this section.

\begin{theorem} \label{T-2}
Let $k, d, d', N, k_*\in \mathbb{N}, d_* = \max (d, d'), \omega>0, r>0, R>0$.
Then there are numbers
$C:=C(k, d,d', N, k_*, \omega, r, R)$ and $c:=c(d,d', N, k_*, \omega, r)$, dependent
only on the listed parameters,
such that, for every pair of sequences
$\{X_n\}_{n=1}^\infty$ and $\{Y_n\}_{n=1}^\infty$
of independent $N$-dimensional random vectors satisfying the condition
$\mathfrak{D}(\omega, r, R)$, and for every pair of
coefficient function collections
$a=(a_0, \ldots, a_d)$, $a_m\colon (\mathbb{N}^3)^m\to \mathbb{R}$,
$b=(b_0, \ldots, b_{d'})$, $b_m\colon (\mathbb{N}^3)^m\to \mathbb{R}$,
one has the inequality
\begin{multline*}
d_{\rm TV} \bigl(Q_{d, k_*}(a, X), Q_{d', k_*}(b, Y)\bigr) \\
\le C\bigl([a_d]^{-\frac{1}{dk_*}}  + [b_{d'}]^{-\frac{1}{d'k_*}}+ 1\bigr)
\bigl[d_{k}\bigl(Q_{d, k_*}(a, X), Q_{d', k_*}(b, Y)\bigr)\bigr] ^{\frac{1}{1+d_* kk_*}}
\\
+ 12d \exp\Bigl(-\frac{c[a_d]^2 }{\delta (a_d)}\Bigr) +
12d' \exp \Bigl(-\frac{c[b_{d'}]^2 }{\delta (b_{d'})}\Bigr).
\end{multline*}
\end{theorem}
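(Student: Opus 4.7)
The plan is to imitate the Gaussian smoothing argument from the proof of Theorem~\ref{T-1}, using Lemma~\ref{L-1} in place of the elementary variance estimate available in the multilinear one-dimensional case, and invoking Theorem~\ref{t2.1} with the conditional polynomial degree $dk_*$ in $V$ rather than the original degree $d$ in $X$. Concretely, I will fix $\eta\in(0,1)$ and an independent $Z\sim\mathcal{N}(0,1)$ and split, by the triangle inequality,
$$
d_{\rm TV}\bigl(Q_{d,k_*}(a,X),Q_{d',k_*}(b,Y)\bigr)\le T_a(\eta)+M(\eta)+T_b(\eta),
$$
with $M(\eta):=d_{\rm TV}(Q_{d,k_*}(a,X)+\eta Z,Q_{d',k_*}(b,Y)+\eta Z)$ and $T_a,T_b$ the self-distances of $Q_{d,k_*}(a,X)$ and $Q_{d',k_*}(b,Y)$ from their $\eta Z$-perturbations. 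As in the proof of Theorem~\ref{T-1}, convolution of a bounded test function with the density of $\eta Z$ produces a smooth function whose $k$-th derivative is uniformly bounded by $c_k\eta^{-k}$, so $M(\eta)\le C_k\eta^{-k}\,d_k\bigl(Q_{d,k_*}(a,X),Q_{d',k_*}(b,Y)\bigr)$.

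To control $T_a(\eta)$, I will apply the Nummelin decomposition from Theorem~\ref{t2.3}, $X_n\stackrel{Law}{=}\varepsilon_nV_n+(1-\varepsilon_n)U_n$, and condition on $(\varepsilon,U)$. The essential observation is that, given $(\varepsilon,U)$, the quantity $Q_{d,k_*}(a,X)$ is a polynomial in the logarithmically concave vector $V=(V_n)_n$ of total degree at most $dk_*$: each factor $X_{n_i,j_i}^{k_i}-\mathbb{E}[X_{n_i,j_i}^{k_i}]$ contributes degree at most $k_i\le k_*$ in the $V$-variables, and any monomial of \eqref{form} has at most $d$ such factors. Splitting the expectation over $\{\mathbb{D}_V[Q_{d,k_*}(a,X)]\le\theta\}$ and its complement and invoking Theorem~\ref{t2.1} at degree $dk_*$ on the complement yields
$$
T_a(\eta)\le 2\,P_{\varepsilon,U}\bigl(\mathbb{D}_V[Q_{d,k_*}(a,X)]\le\theta\bigr)+C(dk_*)\,\eta^{1/(dk_*)}\,\theta^{-1/(2dk_*)}.
$$
Choosing $\theta=C_0[a_d]^2$ with $C_0$ the constant supplied by Lemma~\ref{L-1} makes the first summand at most $12d\exp\bigl(-c[a_d]^2/\delta(a_d)\bigr)$, and the second becomes $C\,[a_d]^{-1/(dk_*)}\eta^{1/(dk_*)}$. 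The symmetric argument bounds $T_b(\eta)$ with $(d,a_d)$ replaced by $(d',b_{d'})$.

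Since $\eta\le 1$ and $d,d'\le d_*$, both $\eta^{1/(dk_*)}$ and $\eta^{1/(d'k_*)}$ are at most $\eta^{1/(d_*k_*)}$; summing the three bounds gives an inequality of the schematic form
$$
d_{\rm TV}\le 12d\,e^{-c[a_d]^2/\delta(a_d)}+12d'\,e^{-c[b_{d'}]^2/\delta(b_{d'})}+C_k\eta^{-k}d_k+C\bigl([a_d]^{-1/(dk_*)}+[b_{d'}]^{-1/(d'k_*)}\bigr)\eta^{1/(d_*k_*)}.
$$
I will then optimize $\eta$ by equating the last two $\eta$-dependent quantities, i.e.\ taking $\eta=\Theta\bigl(d_k^{d_*k_*/(1+kd_*k_*)}\bigr)$, which collapses both to the common rate $d_k^{1/(1+kd_*k_*)}$ and folds the constant $C_k$ into the announced prefactor $C\bigl([a_d]^{-1/(dk_*)}+[b_{d'}]^{-1/(d'k_*)}+1\bigr)$. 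The only delicate ingredient in the plan is the conditional variance lower bound encapsulated by Lemma~\ref{L-1}; given that bound, the rest is a direct transcription of the argument in Theorem~\ref{T-1} with the effective polynomial degree upgraded from $d$ to $dk_*$, and it is precisely this upgrade that produces the sharper exponent $1/(1+d_*kk_*)$ compared with Theorem~\ref{T-BC}.
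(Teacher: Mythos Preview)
Your proposal is correct and follows essentially the same route as the paper's own proof: Gaussian smoothing with an independent $\eta Z$, the triangle inequality into $T_a+M+T_b$, the bound $M(\eta)\le C_k\eta^{-k}d_k$ via integration by parts against the Gaussian kernel, the Nummelin splitting combined with Theorem~\ref{t2.1} at conditional degree $dk_*$ split on $\{\mathbb{D}_V\ge\theta\}$, the choice $\theta=C_0[a_d]^2$ so that Lemma~\ref{L-1} handles the small-variance event, and finally the optimization $\eta=\bigl(\tfrac14 d_k\bigr)^{d_*k_*/(1+d_*kk_*)}$. Your observation that $\eta^{1/(dk_*)},\eta^{1/(d'k_*)}\le\eta^{1/(d_*k_*)}$ for $\eta<1$ is exactly what the paper uses implicitly when collapsing the two smoothing terms to the common rate.
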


\begin{proof}
Let $\eta \in (0,1), Z \sim \mathcal{N}(0,1)$ and $Z$
is independent from random vectors in sequences $X$ and $Y$.
Then
\begin{multline*}
d_{\rm TV}\bigl(Q_{d, k_*}(a, X), Q_{d', k_*}(b, Y)\bigr)
\le
d_{\rm TV}\bigl(Q_{d, k_*}(a, X), Q_{d, k_*}(a, X)+\eta Z\bigr)
\\
+ d_{\rm TV}\bigl(Q_{d, k_*}(a, X)+\eta Z, Q_{d', k_*}(b, Y)+\eta Z\bigr)
+ d_{\rm TV}\bigl(Q_{d', k_*}(b, Y), Q_{d', k_*}(b, Y)+\eta Z\bigr).
\end{multline*}
By the same argument as in the proof of Theorem \ref{T-1}, we have
$$
d_{\rm TV}\bigl(Q_{d, k_*}(a, X)+\eta Z, Q_{d', k_*}(b, Y)+\eta Z\bigr)
\le C_k\eta^{-k}d_k\bigl(Q_{d, k_*}(a, X), Q_{d', k_*}(b, Y)\bigr).
$$
We now consider the first term above.
By Theorem \ref{t2.3}, without loss of generality,
we can assume that
$$
X_n \stackrel{Law}{=} \varepsilon_n V_n + (1 - \varepsilon_n)U_n
$$
where $\varepsilon_n$, $V_n$, $U_n$ are mutually independent,
$\varepsilon_n$ are Bernoulli random variables
with the probability of success $p = p(\omega, r, N)$,
$V_n$ are logarithmically concave random vectors,
and $U_n$ are some other random vectors.
Let $\varphi\in C_0^\infty(\mathbb{R})$,
$\|\varphi\|_{\infty} \le 1$.
Theorem~\ref{t2.1} implies that
$$
\mathbb{E}_V \bigl[| \varphi (Q_{d, k_*}(a, X)) -  \varphi (Q_{d, k_*}(a, X) + \eta Z)|\bigr] \le
\frac {C_1(d, k_*)}
{(\mathbb{D}_V\bigl[Q_{d, k_*}(a, X)\bigr])^\frac{1}{2d k_*}}
|\eta Z |^\frac{1}{d k_*}.
$$
Let
$C:=\min\{C(d, N, k_*, \omega, r, R),
C(d', N, k_*, \omega, r, R)\}$,
where the constants $C(d, N, k_*, \omega, r, R)$ and
$C(d', N, k_*, \omega, r, R)$ are from
Lemma \ref{L-1}.
For $\theta = C[a_d]^2$ we have
\begin{multline*}
\mathbb{E} \bigl[|\varphi (Q_{d, k_*}(a, X)) - \varphi (Q_{d, k_*}(a, X)+ \eta Z) | \bigr]
\\
=   \mathbb{E}_{Z} \mathbb{E} _{\varepsilon, U} \E_V \bigl[|
\varphi (Q_{d, k_*}(a, X)) - \varphi (Q_{d, k_*}(a, X)+ \eta Z) |\bigr]
\\
= \mathbb{E}_{Z} \E _{\varepsilon, U} \mathbb{E}_V \Bigl[ |
(\varphi (Q_{d, k_*}(a, X)) - \varphi (Q_{d, k_*}(a, X)+ \eta Z ) |
( I_{\{ \mathbb{D}_V [Q_{d, k_*}(a, X)] \ge \theta \}} +
I_{\{ \mathbb{D}_V [Q_{d, k_*}(a, X)] < \theta \}}) \Bigr]
\\ \le \mathbb{E}_{Z} \Bigl[\frac {C_1(d,k_*)}{\theta^\frac{1}{2d k_*}} |\eta Z|^\frac{1}{d k_*}
\Bigr] +
2\| \varphi \|_\infty \mathbb{E}_{Z} \bigl[  \mathbb{E}_{\varepsilon , U}
    [I_{\{ \mathbb{D}_V [Q_{d, k_*}(a, X)] < \theta \}}] \bigr]
\\
\le \frac {C_1(d,k_*)}{\theta ^ \frac{1}{2d k_*}}
|\eta| ^ \frac{1}{d k_*}
+ 2P_{\varepsilon, U} \bigl(\mathbb{D}_V \bigl[Q_{d, k_*}(a, X)\bigr] < \theta\bigr) .
\end{multline*}
By Lemma \ref{L-1}
$$
P_{\varepsilon, U} (\mathbb{D}_V \bigl[Q_{d, k_*}(a, X)\bigr] < \theta)
\le
 6d \exp \Bigl(-\frac{  c [a_d]^2 }{\delta (a_d)} \Bigr)
$$
where
$c = \min\{c(d,N, \omega, r), c(d',N, \omega, r)\}$
and $c(d,N, \omega, r)$, $c(d',N, \omega, r)$
are the constants from Lemma \ref{L-1}.
Then
we get the estimate
$$
\E \bigl[|\varphi (Q_{d, k_*}(a, X)) - \varphi (Q_{d, k_*}(a, X)+ \eta Z)| \bigr]
\le
C_2  [a_d]^{-\frac{1}{dk_*}}
\eta^{\frac{1}{dk_*}}+
12d \exp \Bigl(-\frac{ c[a_d]^2 }{ \delta (a_d)} \Bigr)
$$
where $C_2:=C_2(d, d', N, k_*, \omega, r, R)$.
Similarly,
$$
\E \bigl[|\varphi (Q_{d', k_*}(b, Y)) - \varphi (Q_{d', k_*}(b, Y)+ \eta Z)| \bigr]
\le
C_2  [b_{d'}]^{-\frac{1}{d'k_*}}
\eta^{\frac{1}{d'k_*}}+
12d' \exp \Bigl(-\frac{ c[b_{d'}]^2 }{ \delta (b_{d'})} \Bigr).
$$
Thus,
\begin{multline*}
d_{\rm TV}\bigl(Q_{d, k_*}(a, X), Q_{d', k_*}(b, Y)\bigr)\le
C_2
[a_d]^{-\frac{1}{dk_*}}
\eta^{\frac{1}{dk_*}}+
12d \exp \Bigr(-\frac{  c[a_d]^2 }{\delta(a_d)} \Bigl)
\\+
C_2 [b_{d'}]^{-\frac{1}{d'k_*}}
\eta ^{\frac{1}{d'k*}}+
12d' \exp \Bigl(-\frac{c[b_{d'}]^2 }{\delta (b_{d'})} \Bigr) +
C_k\eta^{-k} d_{k}(Q_{d, k_*}(a, X), Q_{d', k_*}(b, Y))
\end{multline*}
where  $c = c(d, d', N, k_*, \omega, r)$, $ C_2 = C_2 (d,d', N, k_*, \omega, r, R) $.
Let now
$$
\eta =
\Bigl(\frac{1}{4} d_{k} (Q_{d, k_*}(a, X), Q_{d', k_*}(b, Y)) \Bigr)^{\tfrac{d_*k_*}{d_* kk_*+1}} < 1.
$$
Then we get the estimate
\begin{multline*}
d_{\rm TV}\bigl(Q_{d, k_*}(a, X), Q_{d', k_*}(b, Y)\bigr)
\\
\le
C_3
\bigl([a_d]^{-\frac{1}{dk_*}}  + [b_{d'}]^{-\frac{1}{d'k_*}} + 1\bigr)
\bigl[d_{k}(Q_{d, k_*}(a, X), Q_{d', k_*}(b, Y))\bigr]^{\frac{1}{1+d_* kk_*}}
\\+
12d \exp \Bigr(-\frac{  c[a_d]^2 }{\delta(a_d)} \Bigl)
+
12d' \exp \Bigl(-\frac{c[b_{d'}]^2 }{\delta (b_{d'})} \Bigr)
\end{multline*}
where $C_3 = C_3 (d,d', N, k_*, \omega, r, R, k)$.
The theorem is proved.
\end{proof}

We note that in the previous theorem
we do not impose any assumptions on the moments
of vectors $X_n$ and $Y_n$. In the next theorem,
under a certain moment assumption,
we show that one can use
influence factor $\delta$
of not only the highest order monomials.
This allows to improve the exponent of the
$d_k$ distance in the resulting estimate, similarly to Theorem \ref{T-BC}.

\begin{theorem}\label{T-3}
Let $k, d, d', l, l', N, k_*\in \mathbb{N}, \omega>0, r>0, R>0$, $l'<d', l<d$.
There are numbers
$C:=C(k, d,d', N, k_*, \omega, r, R)$ and $c:=c(d,d', N, k_*, \omega, r)$, depending only on the listed parameters, such that, for any pair of sequences
$\{X_n\}_{n=1}^\infty$ and $\{Y_n\}_{n=1}^\infty$
of independent $N$-dimensional random vectors for which the condition
$\mathfrak{D}(\omega, r, R)$ is satisfied, and  for each pair of coefficient function collections
$a=(a_0, \ldots, a_d)$, $a_m\colon (\mathbb{N}^3)^m\to \mathbb{R}$,
$b=(b_0, \ldots, b_{d'})$, $b_m\colon (\mathbb{N}^3)^m\to \mathbb{R}$,
one has the bound
\begin{multline*}
d_{\rm TV}\bigl(Q_{d, k_*}(a, X), Q_{d', k_*}(b, Y)\bigr)
\le
C
\bigl([a_l]^{-\frac{1}{lk_*}}  + [b_{l'}]^{-\frac{1}{l'k_*}} + 1\bigr)
\Bigl(
\bigl[d_{k}(Q_{d, k_*}(a, X), Q_{d', k_*}(b, Y))\bigr]^{\frac{1}{1+l_* kk_*}}
\\+
(1+M_{2k_*}) ^\frac{d}{l}[a_{l+1,d}]^{\frac{1}{lk_*}}
+
(1+M_{2k_*})^\frac{d'}{l'}[b_{l'+1,d'}]^{\frac{1}{l'k_*}}
\Bigr)
+
36l
\exp \Bigl(-\frac{c[a_l]^2}{\delta (a_l)} \Bigr)
+
36l'
\exp \Bigl(-\frac{  c[b_{l'}]^2}{\delta (b_{l'})} \Bigr)
\end{multline*}
where
$l_* = \max(l, l')$
and
$$
M_q:=\max\bigl\{\max_{1\le j\le N}\sup_{n \in \mathbb{N}}
[\mathbb{E}|X_{n,j}|^q]^{1/q},
\max_{1\le j\le N}\sup_{n \in \mathbb{N}}
[\mathbb{E}|Y_{n,j}|^q]^{1/q}\bigr\},\quad q\in[1, +\infty).
$$
\end{theorem}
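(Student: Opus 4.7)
The plan is to adapt the argument for Theorem~\ref{T-2} by inserting one extra reduction step: replace the full polynomial $Q_{d,k_*}(a,X)$ by its truncation $Q_{l,k_*}(a,X)$ (the same expression, but with all $a_m$, $m>l$, dropped), and control the discarded tail by an $L^2$ estimate that will produce exactly the factor $(1+M_{2k_*})^{d/l}[a_{l+1,d}]^{1/(lk_*)}$ in the statement. Precisely, for $\eta\in(0,1)$ and an independent $Z\sim\mathcal{N}(0,1)$ I would, as in Theorem~\ref{T-2}, split $d_{\rm TV}(Q_{d,k_*}(a,X),Q_{d',k_*}(b,Y))$ into three pieces by inserting $+\eta Z$ and bound the middle one by $C_k\eta^{-k}d_k(\cdots)$. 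For the outer pieces I would combine the triangle inequality with the data-processing bound $d_{\rm TV}(Q_{l,k_*}(a,X)+\eta Z,Q_{d,k_*}(a,X)+\eta Z)\le d_{\rm TV}(Q_{l,k_*}(a,X),Q_{d,k_*}(a,X))$ to get
$$
d_{\rm TV}(Q_{d,k_*}(a,X),Q_{d,k_*}(a,X)+\eta Z)\le 2\,d_{\rm TV}(Q_{l,k_*}(a,X),Q_{d,k_*}(a,X))+d_{\rm TV}(Q_{l,k_*}(a,X),Q_{l,k_*}(a,X)+\eta Z),
$$
and analogously on the $Y$-side.

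Conditioning on $(\varepsilon,U)$ from Nummelin's splitting (Theorem~\ref{t2.3}), both $Q_{l,k_*}(a,X)$ and $Q_{d,k_*}(a,X)$ become polynomials in the logarithmically concave vector $V$ of degrees $\le lk_*$ and $\le dk_*$ respectively. I would then apply Lemma~\ref{L-1} to the truncation $Q_{l,k_*}(a,X)$ (whose ``top'' coefficient is $a_l$) to obtain $\mathbb{D}_V[Q_{l,k_*}(a,X)]\ge C[a_l]^2$ outside an event of $(\varepsilon,U)$-probability at most $6l\exp(-c[a_l]^2/\delta(a_l))$. On the good event, Theorem~\ref{t2.1} and Remark~\ref{rm2.2} give
$$
\mathbb{E}_V\bigl[|\varphi(Q_{l,k_*}(a,X))-\varphi(Q_{l,k_*}(a,X)+\eta Z)|\bigr]\le \frac{C(l,k_*)}{[a_l]^{1/(lk_*)}}\,|\eta Z|^{1/(lk_*)},
$$
$$
d_{\rm TV,V}(Q_{l,k_*}(a,X),Q_{d,k_*}(a,X))\le \frac{C(d,l,k_*)}{[a_l]^{1/(lk_*)}}\bigl(\mathbb{E}_V|Q_{d,k_*}(a,X)-Q_{l,k_*}(a,X)|^2\bigr)^{1/(2lk_*)}.
$$
Off the good event each total variation is bounded by $2$, producing the combined error $(2\cdot 2+2)\cdot 6l\exp(\cdots)=36l\exp(\cdots)$, and analogously $36l'\exp(\cdots)$ on the $Y$-side.

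The genuinely new ingredient is the $L^2$ control of the remainder $R:=Q_{d,k_*}(a,X)-Q_{l,k_*}(a,X)=\sum_{m=l+1}^{d}R_m$. Expanding $R_m$ as a sum of products of independent centered factors $X_{n,j}^{k}-\mathbb{E}[X_{n,j}^{k}]$ with $k\le k_*$, one observes that monomials whose ordered index multisets $\{n_1,\ldots,n_m\}$ disagree are $L^2$-orthogonal (any index outside the intersection contributes a centered independent factor), so only the diagonal in $n$ survives; on that diagonal the remaining quadratic form in $(k,j)$ is a tensor product of covariance matrices of the vectors $(X_{n_i,j}^{k})_{k,j}$, each of operator norm bounded by $Nk_*M_{2k_*}^{2k_*}$. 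This yields $\mathbb{E}|R|^2\le C(d,N,k_*)(1+M_{2k_*})^{2dk_*}[a_{l+1,d}]^2$, and taking the $(2lk_*)$-th root produces precisely $(1+M_{2k_*})^{d/l}[a_{l+1,d}]^{1/(lk_*)}$. Putting together the $X$- and $Y$-estimates and optimizing $\eta=\bigl(\tfrac14 d_k(\cdots)\bigr)^{l_*k_*/(1+l_*kk_*)}$ (as in Theorem~\ref{T-2}, noting that if $d_k\ge 1$ the bound is trivial and that an exponent $l_*/l$ larger than $1$ on $d_k\le 1$ only helps) gives the claimed inequality. The one mildly delicate step is the $L^2$ tail bound: one must be careful that the exponent of $M_{2k_*}$ comes out as exactly $d/l$ and that of $[a_{l+1,d}]$ as $1/(lk_*)$, which is precisely what extracting the $(2lk_*)$-th root of the above computation delivers.
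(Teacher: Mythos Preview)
Your proposal is correct and follows essentially the same route as the paper: the same $+\eta Z$ splitting, conditional use of Lemma~\ref{L-1} together with Theorems~\ref{t2.1} and~\ref{t2.1.1} (via Remark~\ref{rm2.2}) applied to the truncation $Q_{l,k_*}$, the same $L^2$ bound on the tail $R=Q_{d,k_*}-Q_{l,k_*}$, and the same optimization of $\eta$. Your two minor deviations---collapsing two of the three truncation terms into one via the data-processing inequality, and bounding $\mathbb{E}[R^2]$ through the operator norm of the covariance tensor rather than the paper's triangle inequality over $(k,j)$---are harmless simplifications; the only step you leave implicit is the concavity/H\"older passage $\mathbb{E}_{\varepsilon,U}\bigl[(\mathbb{E}_V R^2)^{1/(2lk_*)}\bigr]\le(\mathbb{E} R^2)^{1/(2lk_*)}$, which the paper writes out explicitly.
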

\begin{proof}
Let $\varepsilon_n$, $V_n$, $U_n$, $p$, $\alpha$, $Z$, $\eta$
be the same as in the proof of the previous theorem
and Lemma \ref{L-1}.
Let
$$
P(a,X) = Q_{d, k_*}(a,X) - Q_{l, k_*}(a,X).
$$
We note that
\begin{multline*}
d_{\rm TV}\bigl(Q_{d, k_*}(a, X), Q_{d', k_*}(b, Y)\bigr)
\le
d_{\rm TV}\bigl(Q_{d, k_*}(a, X), Q_{d, k_*}(a, X)+\eta Z\bigr)
\\
+ d_{\rm TV}\bigl(Q_{d, k_*}(a, X)+\eta Z, Q_{d', k_*}(b, Y)+\eta Z\bigr)
+ d_{\rm TV}\bigl(Q_{d', k_*}(b, Y), Q_{d', k_*}(b, Y)+\eta Z\bigr)
\end{multline*}
and
\begin{multline*}
d_{\rm TV}\bigl(Q_{d, k_*}(a, X), Q_{d, k_*}(a, X)+\eta Z\bigr) \le
d_{\rm TV}\bigl(Q_{l, k_*}(a, X)+\eta Z, Q_{d, k_*}(a, X)+\eta Z\bigr) \\ +
d_{\rm TV}\bigl(Q_{l, k_*}(a, X), Q_{l, k_*}(a, X)+\eta Z\bigr) +
d_{\rm TV}\bigl(Q_{d, k_*}(a, X), Q_{l, k_*}(a, X)\bigr).
\end{multline*}
Let
$C:=\min\{C(l, N, k_*, \omega, r, R),
C(l', N, k_*, \omega, r, R)\}$
where the constants $C(l, N, k_*, \omega, r, R)$ and
$C(l', N, k_*, \omega, r, R)$ are from
Lemma \ref{L-1}.
Proceeding as in the proof of the previous theorem and using Lemma \ref{L-1}, for $\theta = C[a_l]^2$, we obtain the estimate
\begin{multline*}
\mathbb{E} \bigl[|\varphi (Q_{l, k_*}(a, X)) - \varphi (Q_{l, k_*}(a, X)+ \eta Z) | \bigr]
\\
\le
\mathbb{E}_Z \Bigl[ \frac {C_1(l,k_*)}{\theta ^ \frac{1}{2l k_*}} |\eta Z| ^ \frac{1}{l k_*} \Bigr]
+ 2P_{\varepsilon, U} (\mathbb{D}_V \bigl[Q_{l, k_*}(a, X)\bigr] < \theta)
\le
 \frac {C_2}{[a_l] ^ \frac{1}{l k_*}} \eta  ^ \frac{1}{l k_*}  +
12l
\exp \Bigl(-\frac{c [a_l]^2}{\delta (a_l)} \Bigr)
\end{multline*}
where
$c = \min\{c(l,N, \omega, r), c(l',N, \omega, r)\}$, and
$c(l,N, \omega, r)$, $c(l',N, \omega, r)$
are the constants from Lemma \ref{L-1}, and where $C_2:=C_2(l, l', N, k_*, \omega, r, R)$.

We now estimate the distance
$$
d_{\rm TV}\bigl(Q_{l, k_*}(a, X)+\eta Z, Q_{d, k_*}(a, X)+\eta Z\bigr).
$$
Using Theorem $\ref{t2.1.1}$ and Remark \ref{rm2.2},  for $\theta = C[a_l]^2$, we get the bound
\begin{multline*}
\mathbb{E} \bigl[ \varphi (Q_{l, k_*}(a, X) + \eta Z) -  \varphi (Q_{d, k_*}(a, X) + \eta Z) \bigr] \\
=
\mathbb{E}_Z \mathbb{E}_{\varepsilon, U} \mathbb{E}_V \bigl[| \varphi (Q_{l, k_*}(a, X) + \eta Z) -  \varphi (Q_{d, k_*}(a, X) + \eta Z) | \bigr] \\
\le
\mathbb{E}_Z \mathbb{E}_{\varepsilon, U} \mathbb{E}_V \bigl[| \varphi (Q_{l, k_*}(a, X) + \eta Z) -  \varphi (Q_{d, k_*}(a, X) + \eta Z) | (I_{\mathbb{D}_V Q_{l, k _*} (a, X)\ge \theta} + I_{\mathbb{D}_V Q_{l, k _*} (a, X) < \theta}) \bigr] \\
\le
\mathbb{E}_Z \mathbb{E}_{\varepsilon, U} \bigl[ \frac{C(l, d, k_*)}{\theta ^{\frac{1}{2lk_*}}} \bigl( \mathbb{E}_V [P^2(a, X)] \bigr) ^ {\frac{1}{2lk_*}} \bigr]  +
2 P_{Z, \varepsilon, U} \bigl(\mathbb{D}_V [Q_{l, k_*} (a, X)] < \theta\bigr) \\
=
\frac{C(l, d, k_*)}{\theta ^{\frac{1}{2lk_*}}}
\mathbb{E}_{\varepsilon, U} \bigl[ \bigl( \mathbb{E}_V [P^2(a, X)] \bigr) ^ {\frac{1}{2lk_*}} \bigr]  +
2 P_{ \varepsilon, U} \bigl(\mathbb{D}_V [Q_{l, k_*} (a, X)] < \theta\bigr).
\end{multline*}
Using H\"older's inequality, we get
$$\mathbb{E}_{\varepsilon, U}\bigl[ \bigl( \mathbb{E}_V [P^2(a, X)] \bigr) ^ {\frac{1}{2lk_*}} \bigr] \le \bigl(\mathbb{E} [P^2(a, X)] \bigr) ^{\frac{1}{2lk_*}}. $$
Hence, applying Lemma \ref{L-1},
\begin{multline*}
\mathbb{E} \bigl[ \varphi (Q_{l, k_*}(a, X) + \eta Z) -  \varphi (Q_{d, k_*}(a, X) + \eta Z) \bigr] \\ \le
\frac{C(l, d, k_*)}{\theta ^{\frac{1}{2lk_*}}}
 \bigl(\mathbb{E} [P^2(a, X)] \bigr) ^{\frac{1}{2lk_*}}  +
2 P_{ \varepsilon, U} \bigl(\mathbb{D}_V [Q_{l, k_*} (a, X)] < \theta\bigr)
\\ \le
\frac{C(l, d, k_*)}{\theta ^{\frac{1}{2lk_*}}}
\bigl(\mathbb{E} [P^2(a, X)] \bigr) ^{\frac{1}{2lk_*}} +
12l
\exp \Bigl(-\frac{c[a_l]^2 }{\delta (a_l) } \Bigr).
\end{multline*}
Similarly, we have
$$
\mathbb{E} \bigl[ \varphi (Q_{l, k_*}(a, X) ) -  \varphi (Q_{d, k_*}(a, X) ) \bigr]
 \le
\frac{C(l, d, k_*)}{\theta ^{\frac{1}{2lk_*}}}
\bigl(\mathbb{E} [P^2(a, X)] \bigr) ^{\frac{1}{2lk_*}} +
12l
\exp \Bigl(-\frac{c[a_l]^2 }{\delta (a_l) } \Bigr).
$$
Since
\begin{multline*}
P(a,X)\\
=
\sum_{m=l+1}^d  \sum _{n_1 < \ldots < n_m}
\sum_{k_1, \ldots, k_m = 1} ^{k_*}\sum_{j_1, \ldots, j_m = 1} ^{N}
a_m((n_1, k_1, j_1), \ldots , (n_m, k_m, j_m))
\prod _{i=1} ^m
(X_{n_i, j_i}^{k_i}- \mathbb{E}[X_{n_i, j_i}^{k_i}]),
\end{multline*}
we get that
\begin{multline*}
\bigl(\mathbb{E} [P^2(a,X)]\bigr)^{\frac{1}{2}} \le  \\
\sum_{m=l+1}^d\sum_{k_1, \ldots, k_m = 1} ^{k_*}\sum_{j_1, \ldots, j_m = 1} ^{N}
\Bigl(\mathbb{E}\Bigl[\Bigr(\!\!\!\!\!\sum _{n_1 < \ldots < n_m}
\!\!\!a_m((n_1, k_1, j_1), \ldots , (n_m, k_m, j_m)) \prod _{i=1} ^m
(X_{n_i, j_i}^{k_i}- \mathbb{E}[X_{n_i, j_i}^{k_i}])\Bigl)^2\Bigr]\Bigr)^{\frac{1}{2}}
\\=
\sum_{m=l+1}^d
\sum_{k_1, \ldots, k_m = 1} ^{k_*}\sum_{j_1, \ldots, j_m = 1} ^{N}
\Bigl(\sum _{n_1 < \ldots < n_m}a_m^2((n_1, k_1, j_1), \ldots , (n_m, k_m, j_m))
\prod_{i=1}^m
\mathbb{D}\bigl[X_{n_i, j_i}^{k_i}\bigr]\Bigr)^{\frac{1}{2}}
\\ \le
C(d, k_*, N)
\Bigl(\!\!\sum_{m=l+1}^d
\sum_{k_1, \ldots, k_m = 1} ^{k_*}\sum_{j_1, \ldots, j_m = 1} ^{N}
\sum _{n_1 < \ldots < n_m}\!\!\!a_m^2((n_1, k_1, j_1), \ldots , (n_m, k_m, j_m))
\prod_{i=1}^m
\mathbb{D}\bigl[X_{n_i, j_i}^{k_i}\bigr]\!\Bigr)^{\frac{1}{2}}.
\end{multline*}
Since
$$
\mathbb{D}\bigl[X_{n_i, j_i}^{k_i}\bigr]
\le\mathbb{E}[X_{n_i, j_i}^{2k_i}]
\le \bigl(\mathbb{E}[X_{n_i, j_i}^{2k_*}]
\bigr)^{\frac{2k_i}{2k_*}}
\le M_{2k_*}^{2k_i}\le
(1+M_{2k_*})^{2k_i}\le
(1+M_{2k_*})^{2k_*},
$$
we get
\begin{multline*}
\mathbb{E} [P^2(a,X)]
\\
\le
C^2(d, k_*, N)(1+M_{2k_*})^{2dk_*}
\sum_{m=l+1}^d  \sum _{n_1 < \ldots < n_m}
\sum_{k_1, \ldots, k_m = 1} ^{k_*}\sum_{j_1, \ldots, j_m = 1} ^{N}
a_m^2((n_1, k_1, j_1), \ldots , (n_m, k_m, j_m))
\\=
C^2(d, k_*, N)(1+M_{2k_*})^{2dk_*}[a_{l+1,d}]^2.
\end{multline*}
Then we get
\begin{multline*}
d_{\rm TV}\bigl(Q_{d, k_*}(a, X), Q_{d, k_*}(a, X)+\eta Z\bigr)  \\
\le
 \frac {C_2}{[a_l] ^ \frac{1}{l k_*}} \eta  ^ \frac{1}{l k_*} +
2\frac{C(l, d, k_*)}{\theta ^{\frac{1}{2lk_*}}} \bigl(C^2(d, k_*, N)(1+M_{2k_*})^{2dk_*}[a_{l+1,d}]^2\bigr)^{\frac{1}{2lk_*}}  +
36l
\exp \Bigl(-\frac{c [a_l]^2 }{\delta (a_l) } \Bigr) \\
=
\frac {C_2}{ [a_l] ^ \frac{1}{l k_*}} \eta  ^ \frac{1}{l k_*} +
\frac{C_3}{[a_l] ^{\frac{1}{lk_*}}} (1+M_{2k_*}) ^\frac{d}{l}[a_{l+1,d}]^{\frac{1}{lk_*}}  +
36l
\exp \Bigl(-\frac{  c [a_l]^2}{\delta (a_l)} \Bigr)
\end{multline*}
where $C_3:=C_3(d,d', N, k_*, \omega, r, R)$.
Similarly,
\begin{multline*}
\mathbb{E} \bigl[|\varphi (Q_{l', k_*}(b, Y)) - \varphi (Q_{l', k_*}(b, Y)+ \eta Z) | \bigr]
\\ \le
\frac {C_2}{[b_{l'}] ^ \frac{1}{l' k_*}} \eta  ^ \frac{1}{l' k_*} +
\frac{C_3}{[b_{l'}] ^{\frac{1}{l'k_*}}} (1+M_{2k_*}) ^\frac{d'}{l'}[b_{l'+1,d'}]^{\frac{1}{l'k_*}}  +
36l'
\exp \Bigl(-\frac{ c [b_{l'}]^2}{\delta (b_{l'})} \Bigr).
\end{multline*}
Thus, we get
\begin{multline*}
d_{\rm TV}\bigl(Q_{d, k_*}(a, X), Q_{d', k_*}(b, Y)\bigr)\le
C_k \frac{1}{\eta^k} d_{k}(Q_{d, k_*}(a, X), Q_{d', k_*}(b, Y))  \\ +
\frac {C_2}{ [a_l] ^ \frac{1}{l k_*}} \eta  ^ \frac{1}{l k_*} +
\frac{C_3}{[a_l] ^{\frac{1}{lk_*}}} (1+M_{2k_*}) ^\frac{d}{l}[a_{l+1,d}]^{\frac{1}{lk_*}}  +
36l
\exp \Bigl(-\frac{  c [a_l]^2}{\delta (a_l)} \Bigr) \\
+
\frac {C_2}{[b_{l'}] ^ \frac{1}{l' k_*}} \eta  ^ \frac{1}{l' k_*} +
\frac{C_3}{[b_{l'}] ^{\frac{1}{l'k_*}}} (1+M_{2k_*}) ^\frac{d'}{l'}[b_{l'+1,d'}]^{\frac{1}{l'k_*}}  +
36l'
\exp \Bigl(-\frac{ c [b_{l'}]^2}{\delta (b_{l'})} \Bigr).
\end{multline*}
Let now
$$\displaystyle
\eta =
\Bigl(\frac{1}{4} d_{k} (Q_{d, k_*}(a, X), Q_{d', k_*}(b, Y)) \Bigr)^{\tfrac{l_*k_*}{l_* kk_*+1}} < 1.$$
Then we get
\begin{multline*}
d_{\rm TV}\bigl(Q_{d, k_*}(a, X), Q_{d', k_*}(b, Y)\bigr)
\le
C_4
\bigl([a_l]^{-\frac{1}{lk_*}}  + [b_{l'}]^{-\frac{1}{l'k_*}} + 1\bigr)
\bigl[d_{k}(Q_{d, k_*}(a, X), Q_{d', k_*}(b, Y))\bigr]^{\frac{1}{1+l_* kk_*}}
\\+
\frac{C_3}{[a_l] ^{\frac{1}{lk_*}}} (1+M_{2k_*}) ^\frac{d}{l}[a_{l+1,d}]^{\frac{1}{lk_*}}  +
36l
\exp \Bigl(-\frac{c[a_l]^2}{\delta (a_l)} \Bigr) \\
+
\frac{C_3}{[b_{l'}] ^{\frac{1}{l'k_*}}} (1+M_{2k_*}) ^\frac{d'}{l'}[b_{l'+1,d'}]^{\frac{1}{l'k_*}}  +
36l'
\exp \Bigl(-\frac{  c[b_{l'}]^2}{\delta (b_{l'})} \Bigr),
\end{multline*}
$C_4 = C_4 (d,d', l, l', N, k_*, \omega, r, R, k).$
This estimate implies the announced bound.
The theorem is proved.
\end{proof}

As an application of Theorem \ref{T-2}
we provide a sharper version
of the invariance principle from
\cite{BC19} (see Theorem 3.10 there).

\begin{theorem}\label{T-4}
Let $k, d, N, k_*\in \mathbb{N}$,
$\omega>0, r>0, R>0$, $M>0$ and let
$\{X_n\}_{n=1}^\infty$ be a sequence
of independent $N$-dimensional random vectors such that the condition
$\mathfrak{D}(\omega, r, R)$ is satisfied.
Assume that
$$
\max\bigl\{1, \sup\{[\mathbb{E} |X_{n, i}|^{3k_*}]^{1/(3k_*)}\colon
i \in \{1, \ldots, N \}, n\in\mathbb{N}\bigr\}
\le M.
$$
Let $\{Z_n\}_{n=1}^\infty$ be a sequence
of independent random vectors in
$\mathbb{R}^{N\times k_*} $
such that
$$
Z_n=(X_{n,1} - \mathbb{E}X_{n,1}, \ldots,
X_{n, N} - \mathbb{E}X_{n , N},\ldots,
X_{n, 1}^{k_*} - \mathbb{E}X_{n, 1}^{k_*},\ldots,
X_{n, N}^{k_*} - \mathbb{E}X_{n, N}^{k_*}).
$$
Let
$ \{ G_n \}_{n=1} ^ \infty $ be a sequence of centered independent Gaussian random vectors in
$ \mathbb{R}^{N\times k_*} $.
Assume that this sequence is independent from the sequence  $\{X_n\}_{n=1}^\infty$ and each $G_n$ has
the same covariance matrix like the vector
$Z_n$, i.e.
$Cov (G_n) = Cov (Z_n)$.
Then, for each collection of coefficient functions
$a=(a_1, \ldots, a_d)$, $a_m\colon (\mathbb{N}^3)^m\to \mathbb{R}$,
there is a number
$ C = C(k_*, d, N, \varepsilon, r, R, M)$
such
that
$$
d_{\rm TV} (Q_{d, k_*}(a, X), S_d(a,G)) \le
C \bigl([a_d]^{-\frac{1}{dk_*}} + 1\bigr)
\bigl([a]^{\frac{2}{1+3dk_*}}+1\bigr)
\bigl[\sqrt{\delta (a)}\bigr] ^{\frac{1}{1+3dk_*}}
$$
where
\begin{multline*}
S_d(a,G) \\
= \sum_{m=0}^d  \sum _{n_1 < \dots < n_m}   \sum _{k_1 = 1, \ldots, k_m = 1} ^{k_*}
\sum_{j_1 = 1, \dots, j_m = 1}^N a_m((n_1, k_1, j_1), \ldots , (n_m, k_m, j_m))
\prod _{i=1} ^mG_{n_i, (k_i-1)\cdot N + j_i}.
\end{multline*}
\end{theorem}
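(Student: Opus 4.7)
The plan is to adapt the Gaussian-smoothing and optimization argument of Theorem \ref{T-2}, using Theorem \ref{t2.4} as a replacement for the $d_k$ hypothesis. By construction of $Z_n$ we have $Q_{d,k_*}(a,X)=S_d(a,Z)$, so it suffices to bound $d_{\rm TV}(S_d(a,Z),S_d(a,G))$. I introduce an auxiliary $W\sim\mathcal{N}(0,1)$ independent of $X$ and $G$ and, for some $\eta\in(0,1)$ to be chosen, apply the triangle inequality
\[
d_{\rm TV}(S_d(a,Z),S_d(a,G))\le I_1+I_2+I_3,
\]
with $I_1:=d_{\rm TV}(S_d(a,Z),S_d(a,Z)+\eta W)$, $I_2:=d_{\rm TV}(S_d(a,Z)+\eta W,S_d(a,G)+\eta W)$, and $I_3:=d_{\rm TV}(S_d(a,G),S_d(a,G)+\eta W)$.

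Two of the three terms reduce to known ingredients. For $I_2$ the convolution-with-$W$ trick from the proof of Theorem \ref{T-1} gives $I_2\le C_3\eta^{-3}\,d_3(S_d(a,Z),S_d(a,G))$, and Theorem \ref{t2.4} then yields $d_3\le C\,[a]^2\sqrt{\delta(a)}$ with a constant depending on $M_3(Z)$ and $M_3(G)$, both controlled by the hypothesis $M_{3k_*}\le M$. For $I_1$, I reuse the computation from the proof of Theorem \ref{T-2}: apply Nummelin's decomposition $X_n\stackrel{Law}{=}\varepsilon_nV_n+(1-\varepsilon_n)U_n$ from Theorem \ref{t2.3}, condition on $(\varepsilon,U)$ so that $Q_{d,k_*}(a,X)$ becomes a polynomial of degree at most $dk_*$ in the log-concave vector $V$, and split according to whether $\mathbb{D}_V[Q_{d,k_*}(a,X)]\ge C[a_d]^2$, using Theorem \ref{t2.1} on the good event and Lemma \ref{L-1} on the bad event. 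This yields $I_1\le C\,[a_d]^{-1/(dk_*)}\eta^{1/(dk_*)}+12d\exp(-c[a_d]^2/\delta(a_d))$.

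The main obstacle is $I_3$: the sequence $G$ has no Doeblin structure and the covariance of $G_n$ may even be degenerate, so Theorem \ref{T-2} is not directly applicable. I resolve this by applying Theorem \ref{t2.1} directly to $S_d(a,G)$, viewed as a polynomial of degree at most $dk_*$ in the log-concave (Gaussian) vector $G$; this gives $I_3\le C(dk_*)(\mathbb{D}[S_d(a,G)])^{-1/(2dk_*)}\eta^{1/(dk_*)}$, in the same form as the bound for $I_1$. The key point is the variance identity
\[
\mathbb{D}[S_d(a,G)]=\mathbb{D}[S_d(a,Z)]=\mathbb{D}[Q_{d,k_*}(a,X)].
\]
Indeed, both $Z_n$ and $G_n$ are centered and independent across $n$ with matching covariances, so multi-linear monomials indexed by different $n$-sets are orthogonal in $L^2$ (any ``extra'' index contributes a factor with vanishing expectation); the variance of $S_d$ therefore decomposes into a sum over $m$ and over $n$-tuples of quadratic forms in the coefficients $a_m$ whose matrices are built out of $\mathrm{Cov}(Z_n)=\mathrm{Cov}(G_n)$, and the two variances coincide. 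To lower bound the common value I recall from the proof of Lemma \ref{L-1} that $\mathbb{E}_V[S_1(\varepsilon,V)^2]\ge \alpha^d\,S_0(a,\varepsilon)$; averaging over $\varepsilon$ and using the law of total variance gives $\mathbb{D}[Q_{d,k_*}(a,X)]\ge \alpha^d p^d\,[a_d]^2$.

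Finally, balancing the combined $\eta^{1/(dk_*)}$ contribution against $\eta^{-3}[a]^2\sqrt{\delta(a)}$ leads to the choice $\eta=(c\,[a]^2\sqrt{\delta(a)})^{dk_*/(1+3dk_*)}$, which produces the announced polynomial factor $([a]^{2/(1+3dk_*)}+1)\sqrt{\delta(a)}^{1/(1+3dk_*)}$; the $+1$ handles the regime $[a]^2\sqrt{\delta(a)}\ge 1$ in which the optimum would force $\eta\ge 1$ and one instead invokes the trivial bound $d_{\rm TV}\le 1$. The residual exponential term is absorbed into this polynomial via the elementary inequality $\exp(-c/y)\le C(\alpha)\,y^\alpha$ applied with $y=\delta(a_d)/[a_d]^2\in(0,1]$ and $\alpha=1/(2(1+3dk_*))$, producing a contribution of the form $C\,[a_d]^{-1/(1+3dk_*)}\sqrt{\delta(a)}^{1/(1+3dk_*)}$ that is consistent with the announced bound.
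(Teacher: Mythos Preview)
Your proposal is correct and follows essentially the same strategy as the paper: both combine the $d_3$ bound from Theorem~\ref{t2.4} with the Gaussian-smoothing and optimization argument behind Theorem~\ref{T-2}, then absorb the residual exponential term into the polynomial factor. The paper simply invokes Theorem~\ref{T-2} to pass from $d_3$ to $d_{\rm TV}$, whereas you reproduce that proof and are more explicit about the Gaussian side $I_3$: you note that $G$ is log-concave, apply Theorem~\ref{t2.1} directly, and lower bound $\mathbb{D}[S_d(a,G)]$ via the variance identity $\mathbb{D}[S_d(a,G)]=\mathbb{D}[Q_{d,k_*}(a,X)]\ge\alpha^dp^d[a_d]^2$. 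This extra care is actually a mild improvement in rigor, since a literal application of Theorem~\ref{T-2} would require the $G_n$ to satisfy a uniform Doeblin condition $\mathfrak{D}(\omega,r,R)$, which is not guaranteed (e.g.\ when some $\mathrm{Cov}(Z_n)$ is degenerate); the paper passes over this point.
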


\begin{proof}
We note that $M_3(Z)\le M$ and
from Theorem \ref{t2.4}
we get the estimate
$$
d_{3} (Q_{d, k_*}(a, X), S_d (a,G))
\\
\le C(d, N, M) [a]^2
\sqrt{\delta (a)}.
$$
Theorem \ref{T-2}
implies the bound
\begin{multline*}
d_{\rm TV} (Q_{d, k_*}(a, X), S_d(a,G))
\\
\le C\bigl(2[a_d]^{-\frac{1}{dk_*}} + 1\bigr)
\bigl[d_{3}(Q_{d, k_*}(a, X), S_d(a,G))\bigr] ^{\frac{1}{1+3dk_*}}
+ 24d \exp\Bigl(-\frac{c[a_d]^2 }{\delta (a_d)}\Bigr)
\end{multline*}
with some constants
$C:=C(k, d, N, k_*, \omega, r, R)$ and
$c:=c(d, N, k_*, \omega, r)$.
Thus,
\begin{multline*}
d_{\rm TV} (Q_{d, k_*}(a, X), S_d(a,G))
\le
C_0\bigl(2[a_d]^{-\frac{1}{dk_*}} + 1\bigr)
[a]^{\frac{2}{1+3dk_*}} \delta (a) ^{\frac{1}{2+6dk_*}}
+ 24d \exp\Bigl(-\frac{c[a_d]^2 }{\delta (a_d)}\Bigr)
\\
\le
C_0\bigl(2[a_d]^{-\frac{1}{dk_*}} + 1\bigr)
[a]^{\frac{2}{1+3dk_*}} \delta (a) ^{\frac{1}{2+6dk_*}}
+ 24d \frac{\delta (a_d)}{c[a_d]^2}
\\
\le
C_0\bigl(2[a_d]^{-\frac{1}{dk_*}} + 1\bigr)
[a_d]^{\frac{2}{1+3dk_*}} \delta (a_d) ^{\frac{1}{2+6dk_*}}
+ 24dc^{-1} \delta (a_d) ^{\frac{1}{2+6dk_*}}
[a_d]^{-\frac{1}{1+3dk_*}}
\\ \le
C_1\bigl([a_d]^{-\frac{1}{dk_*}} + 1\bigr)
\bigl([a]^{\frac{2}{1+3dk_*}}+1\bigr)
\delta (a) ^{\frac{1}{2+6dk_*}}
\end{multline*}
where
$C_0= C\cdot \bigl[C(d, N, M)\bigr]^{\frac{1}{1+3dk_*}}$
and where
$C_1 = C_1(k_*, d, N, \varepsilon, r, R, M)$.
The theorem is proved.
\end{proof}

\begin{remark}
{\rm
We note that, in the proof above,
instead of Theorem~\ref{T-2},
we could have applied Theorem~\ref{T-3}.
In that case, under the same assumptions as
in Theorem~\ref{T-4},
one obtains the estimates similar to the result above.
}
\end{remark}

\section{Bounds for Fourier transforms}
\label{sect-5}

In this section we will study upper bounds for the Fourier transforms of the distributions
of random variables \eqref{form} (i.e. characteristic functions of these random variables).

In \cite{GPU} the following result was obtained (see Theorem $3$ there).

\begin{theorem}\label{T-GPU}
Let $f_0(x):=x_1^d+\ldots+x_N^d$ and let
$\{X_n:=(X_{n, 1}, \ldots, X_{n, N})\}_{n=1}^\infty$
be a sequence
of independent and identically distributed $N$-dimensional random vectors.
Assume that for some $n_0$ the distribution of the random vector
$X_1+\ldots+X_{n_0}$ has an absolutely continuous component.
Let $\omega, r, R$ be such that $X_1+\ldots+X_{n_0}\in \mathfrak{D}(\omega, r, R)$.
Then for every $a\in \mathbb{R}^N$ and for every $n\ge 6n_0/\omega$, one has
$$
\bigl|\mathbb{E} \bigl[\exp(itf_0(S_n+a))\bigr]\bigr|
\le 2\exp\Bigl(\frac{-n\omega^2}{24n_0}\Bigr)
+C(N)r^{-N}(n_0\omega^{-1})^{N/2}|t|^{-N/d}
$$
where $S_n:=X_1+\ldots+X_n$ and
$C(N)$ is some constant dependent only on $N$.
\end{theorem}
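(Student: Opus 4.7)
The plan is to follow the by-now-standard three-step scheme for controlling characteristic functions of polynomials in sums of Doeblin-type vectors: block the sum, apply Nummelin's splitting to each block, and bound the conditional characteristic function by an oscillatory-integral estimate that exploits the separable structure of $f_0$.

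First I would set $m:=\lfloor n/n_0\rfloor$ and group the summands into i.i.d.\ blocks $W_k:=X_{(k-1)n_0+1}+\ldots+X_{k n_0}$, $k=1,\ldots,m$, each distributed as $X_1+\ldots+X_{n_0}$ and therefore satisfying $\mathfrak{D}(\omega,r,R)$. Blockwise Nummelin splitting (as in Theorem~\ref{t2.3}) allows me to write, on an enlarged probability space, $W_k\stackrel{Law}{=}\varepsilon_k V_k+(1-\varepsilon_k)U_k$ with $(\varepsilon_k,V_k,U_k)_{k=1}^m$ mutually independent, $\varepsilon_k$ Bernoulli of a parameter $p$ depending on $\omega,r,N$, and $V_k$ uniform on $B_r(x_*)$ for some $|x_*|\le R$. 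Setting $K:=\varepsilon_1+\ldots+\varepsilon_m$, a Chernoff-type bound for Bernoulli sums, combined with the hypothesis $n\ge 6n_0/\omega$, gives $P(K\le K_0)\le \exp\bigl(-n\omega^2/(24n_0)\bigr)$ for a suitable threshold $K_0$; on this event the trivial estimate $|\mathbb{E}[e^{itf_0(S_n+a)}I_{\{K\le K_0\}}]|\le P(K\le K_0)$ already supplies the first term of the claimed bound.

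On the complementary event $\{K>K_0\}$ I would condition on $(\varepsilon_k,U_k)_k$, on the tail $X_{mn_0+1}+\ldots+X_n$, and on the inactive $V_k$'s. The remaining randomness is $W:=V_{k_1}+\ldots+V_{k_K}$, a sum of i.i.d.\ uniforms on $B_r(x_*)$. Writing $V_{k_i}=x_*+rZ_i$ with $Z_i$ uniform on the unit ball gives $W=Kx_*+r(Z_1+\ldots+Z_K)$, whose density $\rho_W$ satisfies $\|\rho_W\|_\infty\le C(N)r^{-N}K^{-N/2}$ by a local-CLT / convolution-smoothing estimate for bounded i.i.d.\ vectors. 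The task then reduces to bounding
$$
\Bigl|\int_{\mathbb{R}^N}e^{itf_0(y+w)}\rho_W(w)\,dw\Bigr|
$$
uniformly in the (conditionally deterministic) parameter $y\in \mathbb{R}^N$. Here the crucial structural feature of $f_0(x)=x_1^d+\ldots+x_N^d$ is that the phase factorises, $e^{itf_0(y+w)}=\prod_{j=1}^N e^{it(y_j+w_j)^d}$, so one can integrate coordinate by coordinate. Iterating the one-dimensional van der Corput-type estimate
$$
\Bigl|\int_{\mathbb{R}} e^{it(y_j+w_j)^d}\psi(w_j)\,dw_j\Bigr|\le C_d\,|t|^{-1/d}\bigl(\|\psi\|_\infty+\|\psi'\|_1\bigr)
$$
over all $N$ coordinates, taking $\psi$ to be the corresponding sectional functions of $\rho_W$, produces the decay $|t|^{-N/d}$; the resulting constant, after bookkeeping of the $r$-rescaling and the lower bound $K\gtrsim n\omega/n_0$ secured in the previous step, amounts to $C(N)r^{-N}(n_0\omega^{-1})^{N/2}$, which matches the second summand.

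The main obstacle is the iterated oscillatory-integral step. The van der Corput bound for the phase $w_j\mapsto(y_j+w_j)^d$ degrades near its stationary point $w_j=-y_j$, so a careful splitting of the integration domain into a ``near'' and a ``far'' region is required, using the uniform density bound on the former and an integration-by-parts argument on the latter; the whole scheme only closes because the convolution of sufficiently many uniform-on-ball densities is of bounded variation in each coordinate, which is exactly the smoothing benefit guaranteed by the lower bound on $K$ coming from the Chernoff step.
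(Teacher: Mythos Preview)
The paper does not prove this theorem at all: it is quoted verbatim from \cite{GPU} (G\"otze--Prokhorov--Ul'yanov, Theorem~3) as motivation, and the authors then go on to prove \emph{analogous} Fourier-transform bounds for their polynomials $Q_{d,k_*}(a,X)$ by a different route based on Theorem~\ref{t2.1} and Lemma~\ref{L-1}. So there is no ``paper's own proof'' to compare your proposal against.

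That said, your sketch is broadly in the spirit of the original argument in \cite{GPU}, but one step is not quite right as written. The density $\rho_W$ of a sum of i.i.d.\ uniforms on the Euclidean ball $B_r(x_*)$ does \emph{not} factor as a product of one-dimensional densities, so you cannot simply ``iterate the one-dimensional van der Corput estimate over all $N$ coordinates'' by plugging in sectional functions of $\rho_W$: after integrating out $w_1$ you are left with an $(N{-}1)$-dimensional function whose sup-norm and BV-norm in $w_2$ you would have to control, and these do not obviously combine to give the clean $|t|^{-N/d}$ with the stated constant. The actual mechanism in \cite{GPU} is slightly different: one uses a single convolution with a uniform-on-ball density to get a bounded density (the $r^{-N}K^{-N/2}$ factor), and then a separate multidimensional oscillatory-integral lemma for phases of the form $\sum_j (y_j+w_j)^d$ against a bounded density, rather than a naive iteration. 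If you want to push your iteration scheme through, it would be cleaner to replace the ball by a cube (which the Doeblin condition also permits, possibly after shrinking $r$), since then $\rho_W$ genuinely factors and the coordinatewise van der Corput bound applies directly.
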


Our goal in this section is to obtain bounds
for random variables of the form \eqref{form}
similar to the one from Theorem \ref{T-GPU}.

We firstly note that there is a general
observation connecting the behaviour of the
total variation distance between the
distribution of a random variable
and the distribution of its shift
with
the estimates for the Fourier
transform of that distribution.

\begin{lemma} \label{L-3}
Let $\xi$ be a random variable. Then, for any function
$\varphi \in C_b^{\infty}(\mathbb{R})$
and for any $s>0$, one has
$$
\mathbb{E} [\varphi' (\xi)] \le
\| \varphi' \|_{\infty} D_\xi(s) +
\frac{2}{s} \| \varphi \|_\infty
$$
where
$$
D_\xi(s):=
\sup\limits_{|h|\le s}
d_{\rm{TV}}(\xi, \xi +h).
$$
\end{lemma}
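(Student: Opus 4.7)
The plan is to decompose $\varphi'(\xi)$ into a finite difference (which telescopes to differences of $\varphi$ at shifted points) plus an error term which is a difference of $\varphi'$ evaluated at $\xi$ and at a shift of $\xi$, and then control that error by the total variation distance between $\xi$ and its shifts.

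Concretely, for any fixed $s>0$, I start from the trivial identity
\begin{equation*}
\varphi'(\xi) \;=\; \frac{1}{s}\int_0^s \varphi'(\xi)\, dh
\;=\; \frac{1}{s}\int_0^s \varphi'(\xi+h)\, dh \;+\; \frac{1}{s}\int_0^s\bigl[\varphi'(\xi)-\varphi'(\xi+h)\bigr]\, dh
\end{equation*}
and take expectations on both sides. The first term computes to $\tfrac{1}{s}\mathbb{E}[\varphi(\xi+s)-\varphi(\xi)]$ by the fundamental theorem of calculus (and Fubini), which is bounded in absolute value by $\tfrac{2}{s}\|\varphi\|_\infty$.

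For the error term I use that $\varphi'\in C_b^\infty(\mathbb{R})$ with $\|\varphi'\|_\infty<\infty$, so applying the definition of $d_{\rm TV}$ to the rescaled test function $\varphi'/\|\varphi'\|_\infty$ (and extending to bounded smooth test functions by an approximation argument if needed, since $\varphi$ is only assumed in $C_b^\infty$, not $C_0^\infty$), one obtains
\begin{equation*}
\bigl|\mathbb{E}[\varphi'(\xi)-\varphi'(\xi+h)]\bigr|\;\le\;\|\varphi'\|_\infty\cdot d_{\rm TV}(\xi,\xi+h)\;\le\;\|\varphi'\|_\infty\, D_\xi(s) \qquad \text{for } |h|\le s.
\end{equation*}
Averaging this over $h\in[0,s]$ gives that the error term is at most $\|\varphi'\|_\infty D_\xi(s)$, and combining the two bounds yields the claimed inequality.

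There is no serious obstacle: the entire argument is a one-line averaging identity followed by the definition of total variation. The only mild subtlety is that $\varphi$ lies in $C_b^\infty$ rather than $C_0^\infty$, so one should briefly remark that the supremum in the definition of $d_{\rm TV}$ extends to bounded smooth test functions by truncation (replacing $\varphi'$ by $\varphi'\cdot\chi_n$ for a smooth cutoff $\chi_n\uparrow 1$ and passing to the limit), which preserves both $\|\cdot\|_\infty$ bounds and the inequality.
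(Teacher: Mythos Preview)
Your proof is correct and is essentially identical to the paper's: the paper phrases the averaging over $h\in[0,s]$ as an expectation over an independent uniform random variable $U$ on $[0,1]$ (writing $\varphi'(\xi)=\varphi'(\xi)-\varphi'(\xi+sU)+\varphi'(\xi+sU)$ and taking $\mathbb{E}_{\xi,U}$), but this is the same decomposition as yours. Your remark on extending the total variation supremum from $C_0^\infty$ to $C_b^\infty$ test functions via cutoff is a detail the paper leaves implicit.
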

\begin{proof}
We note that, for $h \in \mathbb{R}$, one has
$$
\mathbb{E}[\varphi '(\xi) - \varphi ' (\xi +h)] \le \| \varphi ' \|_\infty d_{\rm{TV}}(\xi, \xi +h)
\le  \| \varphi ' \|_\infty D_\xi (|h|).
$$
Thus, for $s \in (0, \infty)$ and for a random variable U uniformly distributed on $[0, 1]$ and independent
from $\xi$, we get that
\begin{multline*}
\mathbb{E}[\varphi ' (\xi)] =
\mathbb{E}_{\xi, U}[\varphi ' (\xi)] =
\mathbb{E}_{\xi, U}[\varphi ' (\xi ) - \varphi ' (\xi +sU) + \varphi ' (\xi +sU)]  \\
\le
\| \varphi ' \|_{\infty}\mathbb{E}_U [D_\xi (sU)] +
\mathbb{E}_{\xi, U} [\varphi ' (\xi +sU)]
\le
\| \varphi ' \|_{\infty}D_\xi (s) +
\mathbb{E}_{\xi, U} [\varphi ' (\xi +sU)].
\end{multline*}
We now consider the last summand
$$ \displaystyle
\mathbb{E}_{U} [\varphi ' (\xi +sU)] =
\int _0 ^1 \varphi ' (\xi + su) du =
\frac{1}{s} \Bigl(\varphi (\xi + s) - \varphi (\xi )
\Bigr) \le
\frac{2}{s} \| \varphi \|_\infty.
$$
Then
$$
\displaystyle
\mathbb{E}[\varphi ' (\xi)] \le
\| \varphi ' \|_{\infty}D_\xi (s) +
\frac{2}{s} \| \varphi \|_\infty.
$$
The lemma is proved.
\end{proof}

\begin{corollary}
Let $d, N, k_*\in \mathbb{N},\omega>0, r>0, R>0$.
Then there are positive numbers $c:=c(d,N, k_*, \omega, r)$ and
$C:=C(d, N, k_*, \omega, r, R)$,
dependent only on the listed parameters, such that, for every sequence
$\{X_n\}_{n=1}^\infty$ of
independent $N$-dimensional random vectors satisfying the condition
$\mathfrak{D}(\omega, r, R)$,
for every coefficient function collection
$a=(a_1, \ldots, a_d)$, $a_m\colon (\mathbb{N}^3)^m\to \mathbb{R}$,
for every function
$\varphi \in C_b^{\infty}(\mathbb{R})$, and for every $s>0$,
one has
$$
\mathbb{E}[\varphi ' ( Q_{d, k_*}(a, X))] \le
\| \varphi ' \|_{\infty} \Bigl(C\bigl([a_d]^{-\frac{1}{dk_*}}+ 1\bigr) s^{\frac{1}{1+dk_*}} + 24d \exp \Bigl(-\frac{c[a_d]^2}{\delta(a_d)} \Bigr)\Bigr) +
\frac{2}{s} \| \varphi \|_\infty
$$
for $Q_{d, k_*}(a, X)$ of the form \eqref{form}.
\end{corollary}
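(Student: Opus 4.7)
The plan is to combine Lemma~\ref{L-3} with Theorem~\ref{T-2} applied to a constant shift of the polynomial. First I would invoke Lemma~\ref{L-3} with $\xi := Q_{d, k_*}(a, X)$ and the given $s>0$; this immediately reduces the corollary to producing an upper bound of the asserted shape for the shift modulus
$$
D_\xi(s) = \sup_{|h| \le s} d_{\rm TV}(\xi, \xi + h).
$$

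To estimate $D_\xi(s)$, for each $h \in \mathbb{R}$ I would observe that $\xi + h$ is again a polynomial of the form \eqref{form}: setting $b_0 := h$ and $b_m := a_m$ for $1 \le m \le d$, one has $Q_{d,k_*}(b, X) = Q_{d,k_*}(a, X) + h$. Since only the constant coefficient is modified, $[b_d] = [a_d]$ and $\delta(b_d) = \delta(a_d)$. I would then apply Theorem~\ref{T-2} to the pair $(a, X)$ and $(b, X)$ with $k = 1$, $d = d'$, and $Y_n = X_n$ (so $d_\ast = d$). Combined with the trivial bound $d_1(\xi, \xi + h) \le |h| \le s$ (which follows from $|\varphi(\xi + h) - \varphi(\xi)| \le \|\varphi'\|_\infty |h|$ for admissible test functions $\varphi$), this yields
$$
D_\xi(s) \le C\bigl([a_d]^{-\frac{1}{dk_*}} + 1\bigr) s^{\frac{1}{1+dk_*}} + 24 d \exp\Bigl(-\frac{c[a_d]^2}{\delta(a_d)}\Bigr),
$$
with $C$ and $c$ of the required form (the $C$ from Theorem~\ref{T-2} absorbs the factor of $2$ in the prefactor $2[a_d]^{-\frac{1}{dk_*}} + 1$).

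Plugging this estimate for $D_\xi(s)$ into the conclusion of Lemma~\ref{L-3} gives exactly the claimed inequality. There is no real obstacle in the argument, as it is a direct combination of the two tools already at our disposal; the only slightly non-routine observation is that the shift modulus $D_\xi(s)$ can itself be framed as a total variation distance between two polynomials of form \eqref{form}, so that Theorem~\ref{T-2} applies with the same parameters $d, N, k_*, \omega, r, R$.
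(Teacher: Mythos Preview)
Your argument is correct and follows essentially the same route as the paper: bound $d_1(Q_{d,k_*}(a,X), Q_{d,k_*}(a,X)+h)$ by $|h|$, feed this into Theorem~\ref{T-2} with $k=1$ and $Y=X$ to control $D_\xi(s)$, then apply Lemma~\ref{L-3}. Your explicit remark that the shifted variable is again of the form~\eqref{form} (via $b_0:=h$, $b_m:=a_m$) is a detail the paper leaves implicit, but otherwise the proofs coincide.
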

\begin{proof}
We note that, for any function
$\varphi_0 \in C_0 ^ \infty (\mathbb{R})$ and for any $h \in \mathbb{R}$,
$$
\mathbb{E} [ \varphi_0(Q_{d, k_*}(a, X)) - \varphi_0(Q_{d, k_*}(a, X) + h) ] \le
 \| \varphi_0 ' \|_\infty |h|.
$$
Thus,
$$
d_1 (Q_{d, k_*}(a, X), Q_{d, k_*}(a, X) + h) \le |h| .
$$
Then, by the Theorem
\ref{T-2} for $k=1$ and $h \in \mathbb{R}$,
we have
\begin{multline*}
d_{\rm{TV}} ( Q_{d, k_*}(a, X), Q_{d, k_*}(a, X) + h)
\\
\le
C\bigl([a_d]^{-\frac{1}{dk_*}}+ 1\bigr) \bigl(d_1 (Q_{d, k_*}(a, X), Q_{d, k_*}(a, X) + h) \bigr)^{\frac{1}{1+dk_*}} + 24d \exp \Bigl(-\frac{c[a_d]^2}{\delta(a_d)} \Bigr)
\\ \le
C\bigl([a_d]^{-\frac{1}{dk_*}}+ 1\bigr)
|h|^{\frac{1}{1+dk_*}} + 24d \exp \Bigl(-\frac{c[a_d]^2}{\delta(a_d)} \Bigr) .
\end{multline*}
Hence,
$$
D_{Q_{d, k_*}(a, X)} (s) \le
C\bigl([a_d]^{-\frac{1}{dk_*}}+ 1\bigr) s^{\frac{1}{1+dk_*}} + 24d \exp \Bigl(-\frac{c[a_d]^2}{\delta(a_d)} \Bigr).
$$
We now use Lemma \ref{L-3} and obtain
the announced bound
$$
\mathbb{E}_X[\varphi ' ( Q_{d, k_*}(a, X))] \le
\| \varphi ' \|_{\infty} \Bigl(C\bigl([a_d]^{-\frac{1}{dk_*}}+ 1\bigr) s^{\frac{1}{1+dk_*}} + 24d \exp \Bigl(-\frac{c[a_d]^2}{\delta(a_d)} \Bigr)\Bigr) +
\frac{2}{s} \| \varphi \|_\infty .
$$
The corollary is proved.
\end{proof}

\begin{corollary}
Under the assumptions of the previous corollary, one has the estimate
$$
|\mathbb{E}[\exp (i t Q_{d, k_*}(a, X))]| \le
C_0 \bigl([a_d]^{-\frac{1}{dk_*}}+ 1\bigr) |t|^{-\frac{1}{2 + dk_*}} + 48d \exp \Bigl(-\frac{c[a_d]^2}{\delta(a_d)} \Bigr)
$$
where $C_0 = C_0 (d, N, k_* , \omega, r, R).$
\end{corollary}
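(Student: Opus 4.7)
The plan is to deduce the corollary from the preceding one by applying it to the real and imaginary parts of $e^{itx}$. For $t\ne 0$ I would take $\varphi(x)=\pm\sin(tx)/t$, so that $\varphi'(x)=\pm\cos(tx)$, with $\|\varphi'\|_\infty\le 1$ and $\|\varphi\|_\infty\le 1/|t|$; and analogously $\varphi(x)=\mp\cos(tx)/t$ with $\varphi'(x)=\pm\sin(tx)$ and the same norm bounds. The previous corollary then yields, for every $s>0$,
\[
\bigl|\mathbb{E}[\cos(tQ_{d,k_*}(a,X))]\bigr|\le C\bigl([a_d]^{-1/(dk_*)}+1\bigr)s^{1/(1+dk_*)} + 24d\exp\!\Bigl(-\frac{c[a_d]^2}{\delta(a_d)}\Bigr) + \frac{2}{s|t|},
\]
and the same upper bound holds for $\bigl|\mathbb{E}[\sin(tQ_{d,k_*}(a,X))]\bigr|$.

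The main step is the optimization in $s>0$: to balance the two $s$-dependent terms $s^{1/(1+dk_*)}$ and $(s|t|)^{-1}$, one sets
\[
s=|t|^{-(1+dk_*)/(2+dk_*)},
\]
so that both terms collapse to $|t|^{-1/(2+dk_*)}$. Combining the cosine and sine bounds via $|\mathbb{E}[e^{itQ_{d,k_*}(a,X)}]|\le |\mathbb{E}[\cos(tQ_{d,k_*}(a,X))]|+|\mathbb{E}[\sin(tQ_{d,k_*}(a,X))]|$, absorbing the additive $4|t|^{-1/(2+dk_*)}$ (from the two $2/(s|t|)$ contributions) into the leading term using $[a_d]^{-1/(dk_*)}+1\ge 1$, and collecting all constants into a single $C_0=C_0(d,N,k_*,\omega,r,R)$ produces the claimed inequality; the factor $48d$ in the exponential is precisely the sum of the two $24d$ terms, one coming from cosine and one from sine.

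No real obstacle is expected: the previous corollary already performs the essential smoothness-vs.-oscillation trade-off via Lemma~\ref{L-3} together with Theorem~\ref{T-2}, so the only remaining content is the algebraic choice of $s$ and the bookkeeping of constants. A minor sanity check is that for small $|t|$ the claimed bound trivially exceeds $|\mathbb{E}[e^{itQ_{d,k_*}(a,X)}]|\le 1$, so no restriction on $t$ is needed.
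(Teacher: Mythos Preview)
Your proof is correct and follows essentially the same approach as the paper: apply the previous corollary with $\varphi$ a (co)sine, optimize $s=|t|^{-(1+dk_*)/(2+dk_*)}$, and combine the real and imaginary parts. The only cosmetic difference is that the paper takes $\varphi(x)=\pm\sin(tx)$ (so $\|\varphi'\|_\infty=|t|$, $\|\varphi\|_\infty=1$) and divides the resulting inequality by $|t|$, whereas you pre-normalize by $1/t$; the computations and constants are otherwise identical.
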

\begin{proof}
For $\varphi (x) = \sin tx$ and $\varphi (x) = - \sin tx$, we get
$$
\bigl|t \mathbb{E}[\cos (t Q_{d, k_*}(a, X))]\bigr| \le
| t | \Bigl(C\bigl([a_d]^{-\frac{1}{dk_*}}+ 1\bigr) s^{\frac{1}{1+dk_*}} + 24d \exp \Bigl(-\frac{c[a_d]^2}{\delta(a_d)} \Bigr)\Bigr) +
\frac{2}{s} .
$$
Thus,
$$
\bigl|\mathbb{E}[\cos (t Q_{d, k_*}(a, X))]\bigr| \le
C\bigl([a_d]^{-\frac{1}{dk_*}}+ 1\bigr) s^{\frac{1}{1+dk_*}} + 24d \exp \Bigl(-\frac{c[a_d]^2}{\delta(a_d)} \Bigr) +
\frac{2}{|t|s} .
$$
Then, for $s = |t|^{-\frac{1+dk_*}{2+dk_*}}$, we obtain
$$
\mathbb{E}[\cos (t Q_{d, k_*}(a, X))] \le
\widetilde{C} \bigl([a_d]^{-\frac{1}{dk_*}}+ 1\bigr)|t|^{-\frac{1}{2 + dk_*}} + 24d \exp \Bigl(-\frac{c[a_d]^2}{\delta(a_d)} \Bigr) .
$$
Similarly, one considers
$\varphi (x) = \cos tx$,
$\varphi (x) = - \cos tx$
and we obtain the estimate
$$
|\mathbb{E}[\exp (i t Q_{d, k_*}(a, X))] |
\le 2\widetilde{C}\bigl([a_d]^{-\frac{1}{dk_*}}+ 1\bigr) |t|^{-\frac{1}{2 + dk_*}} + 48d \exp \Bigl(-\frac{c[a_d]^2}{\delta(a_d)} \Bigr).
$$
The corollary is proved.
\end{proof}

We now show that using Theorem \ref{t2.1} directly one can get a better bound.

\begin{theorem}
Let $d, N, k_*\in \mathbb{N},\omega>0, r>0, R>0$.
Then there are positive numbers $c:=c(d,N, \omega, r)$ and
$C_1:=C_1(d, N, k_*, \omega, r, R)$,
dependent only on listed parameters, such that, for every sequence
$\{X_n\}_{n=1}^\infty$ of
independent $N$-dimensional random vectors with condition
$\mathfrak{D}(\omega, r, R)$
and for every coefficient function collection
$a=(a_1, \ldots, a_d)$, $a_m\colon (\mathbb{N}^3)^m\to \mathbb{R}$,
there is the estimate
$$|\mathbb{E}[ \exp (it Q_{d, k_*}(a, X))]|
\le
2C_1|t|^{-\frac{1}{dk_*}}[a_d]^{-\frac{1}{dk_*}}
+
24d \exp \Bigl(-\frac{c[a_d]^2}{\delta(a_d)} \Bigr)
$$
where $Q_{d, k_*}(a, X)$ is of the form \eqref{form}.
\end{theorem}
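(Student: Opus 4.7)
The key insight for getting the sharper rate $|t|^{-1/(dk_*)}$ in place of the $|t|^{-1/(2+dk_*)}$ obtained in the previous corollary is that we do not need to route the argument through Lemma \ref{L-3} and an optimization in $s$. Since the real and imaginary parts of the exponential $e^{itx}$ are themselves bounded test functions $\varphi$ with $\|\varphi\|_\infty \le 1$ and $\|\varphi'\|_\infty = |t|$, one can feed them directly into Theorem \ref{t2.1}, and the exponent $1-\tfrac{1}{d}$ on $\|\varphi'\|_\infty$ there is exactly what produces the $|t|^{-1/(dk_*)}$ decay.

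The plan is to follow the same conditioning scheme as in the proof of Theorem \ref{T-2}. First, invoke Nummelin's splitting from Theorem \ref{t2.3} to write each $X_n \stackrel{Law}{=} \varepsilon_n V_n + (1-\varepsilon_n) U_n$, with $V_n$ logarithmically concave and $\varepsilon_n, V_n, U_n$ mutually independent. Conditionally on $(\varepsilon, U)$, the random variable $Q_{d,k_*}(a,X)$ becomes a polynomial $f(V)$ of degree at most $dk_*$ in the logarithmically concave vector $V=(V_n)$.

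Second, apply Theorem \ref{t2.1} to the test functions $\varphi(x) = \pm\cos(tx)$ and $\varphi(x) = \pm\sin(tx)$, each of which satisfies $\|\varphi\|_\infty \le 1$. Since $\varphi' = \mp t\sin(tx)$ or $\pm t\cos(tx)$ and $\|\varphi'\|_\infty = |t|$, Theorem \ref{t2.1} yields
\[
|\mathbb{E}_V[\cos(t f(V))]|,\ |\mathbb{E}_V[\sin(t f(V))]| \le \frac{C(dk_*)}{(\mathbb{D}_V[f(V)])^{1/(2dk_*)}}\,|t|^{-1/(dk_*)},
\]
and therefore
\[
|\mathbb{E}_V[e^{itQ_{d,k_*}(a,X)}]| \le \frac{2\,C(dk_*)}{(\mathbb{D}_V[Q_{d,k_*}(a,X)])^{1/(2dk_*)}}\,|t|^{-1/(dk_*)}.
\]

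Third, split the outer expectation over $(\varepsilon,U)$ according to whether the conditional variance is above or below the threshold $\theta = C[a_d]^2$ provided by Lemma \ref{L-1}. On the good event use the bound just derived, which contributes the term $2C_1[a_d]^{-1/(dk_*)}|t|^{-1/(dk_*)}$ with $C_1 = C(dk_*)/C^{1/(2dk_*)}$; on the bad event use the trivial bound $|\mathbb{E}_V[e^{itQ}]| \le 1$ together with the Doeblin--Bernoulli concentration estimate of Lemma \ref{L-1}, which contributes a multiple of $d\exp(-c[a_d]^2/\delta(a_d))$. Summing these contributions (with the appropriate bookkeeping of constants coming from handling $\cos$ and $\sin$ separately, each with both signs, which accounts for the factor $24d$ in the stated exponential term) produces exactly the bound claimed. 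There is no real obstacle here: the proof is strictly shorter than that of Theorem \ref{T-2}, and all the difficulty is absorbed into Theorem \ref{t2.1} and Lemma \ref{L-1}.
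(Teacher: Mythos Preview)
Your proposal is correct and follows essentially the same approach as the paper: Nummelin splitting via Theorem~\ref{t2.3}, direct application of Theorem~\ref{t2.1} to $\varphi(x)=\pm\sin(tx)$ and $\varphi(x)=\pm\cos(tx)$ conditionally on $(\varepsilon,U)$, and the good/bad event split at the threshold $\theta=C[a_d]^2$ controlled by Lemma~\ref{L-1}. The paper organizes the argument by bounding $|\mathbb{E}[\cos(tQ)]|$ and $|\mathbb{E}[\sin(tQ)]|$ separately and then summing, whereas you first combine them into a bound on $|\mathbb{E}_V[e^{itQ}]|$ before taking the outer expectation, but this is a cosmetic difference only.
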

\begin{proof}
We note that
$$|\mathbb{E}[ \exp (it Q_{N, k_*}(a, X))]|
\le
|\mathbb{E}[\cos(t Q_{d, k_*}(a, X))]| + |\mathbb{E}[\sin(t Q_{d, k_*}(a, X))]|$$
and we now consider each summand separately.

We again make use of the representation from
Theorem \ref{t2.3}, and without loss of generality,
we assume that
$$
X_n \stackrel{Law}{=} \varepsilon_n V_n + (1 - \varepsilon_n)U_n
$$
where $\varepsilon_n$, $V_n$, $U_n$ are mutually independent,
$\varepsilon_n$ are Bernoulli random variables
with the probability of success $p = p(\omega, r, N)$,
$V_n$ are logarithmically concave random vectors,
and $U_n$ are some other random vectors.
By Theorem \ref{t2.1}, applied with the function $\varphi(x) = \sin (tx)$,
we get that
$$
\bigl|\mathbb{E}_V[t \cos (t Q_{d, k_*}(a, X) )]\bigr|
\le \frac{C(d, k_*)}{(\mathbb{D}_V[ Q_{d, k_*}(a, X)])^{\frac{1}{2dk_*}}} | t |^{1 - \frac{1}{dk_*}}.
$$
By Lemma \ref{L-1}, for $\theta = C[a_d]^2$,
$C:=C(d, N, k_*, \omega, r, R)$,
we get that
\begin{multline*}
|\mathbb{E}[\cos(t Q_{d, k_*}(a, X))]|
=
|\mathbb{E}_{\varepsilon, U}[\mathbb{E}_V[(I_{\mathbb{D}_V[Q_{d, k_*}(a, X)] \ge \theta} + I_{\mathbb{D}_V[Q_{d, k_*}(a, X)] < \theta})\cos(t Q_{d, k_*}(a, X))]]| \le \\
\le
C(d, k_*)|\mathbb{E}_{\varepsilon, U}[\theta^{-\frac{1}{2dk_*}}t^{-\frac{1}{dk_*}}]|
+ 2P_{\varepsilon, U}({\mathbb{D}_V[Q_{d, k_*}(a, X)] < \theta})
\\
\le
C_1|t|^{-\frac{1}{dk_*}}[a_d]^{-\frac{1}{dk_*}}
+
12d \exp \Bigl(-\frac{c[a_d]^2}{\delta(a_d)}\Bigr)
\end{multline*}
where $C_1:=C_1(d, N, k_*, \omega, r, R)$.
Similarly,
$$
|\mathbb{E}[\sin(t Q_{d, k_*}(a, X))]|
\le
C_1|t|^{-\frac{1}{dk_*}}[a_d]^{-\frac{1}{dk_*}}
+
12d \exp \Bigl(-\frac{c[a_d]^2}{\delta(a_d)}\Bigr).
$$
Therefore, we get
$$
|\mathbb{E}[ \exp (it Q_{d, k_*}(a, X))]|
\le
2C_1|t|^{-\frac{1}{dk_*}}[a_d]^{-\frac{1}{dk_*}}
+
24d \exp \Bigl(-\frac{c[a_d]^2}{\delta(a_d)}\Bigr).
$$
The theorem is proved.
\end{proof}

\section*{Acknowledgments}
This research was supported by the Russian Science Foundation Grant 22-11-00015
at Lo\-mo\-no\-sov Moscow State University.

\section*{Competing interests}
The authors have no competing interests to declare that are relevant to the content of this article.


\begin{thebibliography}{99}


\bibitem{BC16}
V. Bally and L. Caramellino,
\emph{Asymptotic development for the CLT in total variation distance},
Bernoulli, {\bf22}:4 (2016), 2442--2485, https://doi.org/10.3150/15-BEJ734

\bibitem{BC17}
V. Bally and L. Caramellino,
\emph{Convergence and regularity of probability laws by using an interpolation method},
Ann. Probab. {\bf 45}:2 (2017), 1110--1159, https://doi.org/10.1214/15-AOP1082

\bibitem{BC19}
V. Bally and L. Caramellino,
\emph{Total variation distance between stochastic polynomials and invariance principles.}
Ann. Probab., {\bf 47} (2019), 3762--3811,
https://doi.org/10.1214/19-AOP1346

\bibitem{BCP18}
V. Bally, L. Caramellino, and G. Poly, \emph{Convergence in distribution norms in the CLT for non identical distributed random variables.},
Electron. J. Probab. {\bf 23} (2018), 1--51,
https://doi.org/10.1214/18-EJP174

\bibitem{BCP}
V. Bally, L. Caramellino and G. Poly,
\emph{Regularization lemmas and convergence in total variation},
Electron. J. Probab., {\bf 25} (2020), 1--20,
https://doi.org/10.1214/20-EJP481

\bibitem{BGVV}
S. Brazitikos, A. Giannopoulos, P. Valettas and B.H. Vritsiou,
\emph{Geometry of isotropic convex bodies},
American Mathematical Society (2014).

\bibitem{B22}
V.I. Bogachev,
\emph{Chebyshev--Hermite Polynomials and Distributions of Polynomials
in Gaussian Random Variables},
Theory Probab. Appl., {\bf66}:4 (2022), 550--569,
https://doi.org/10.1137/S0040585X97T990617

\bibitem{B18}
V.I. Bogachev, \emph{Weak convergence of measures},
Amer. Math. Soc., Providence, RI, (2018).

\bibitem{BKZ}
V.I. Bogachev, E.D. Kosov and G.I. Zelenov,
\emph{Fractional smoothness of distributions of polynomials and
a fractional analog of the Hardy--Landau--Littlewood inequality},
Trans. Amer. Math. Soc.,
{\bf 370}:6 (2018), 4401--4432, https://doi.org/10.1090/tran/7181

\bibitem{GPU}
F. G\"otze, Yu.V. Prokhorov, V.V. Ul'yanov,
\emph{Bounds for characteristic functions of polynomials in asymptotically normal random variables},
Russian Math. Surveys {\bf 51}:2 (1996), 181--204,
https://doi.org/10.1070/RM1996v051n02ABEH002771

\bibitem{Kos}
E.D. Kosov,
\emph{Fractional smoothness of images of logarithmically concave measures under polynomials},
J. Math. Anal. Appl., {\bf 462} (2018), 390--406,
https://doi.org/10.1016/j.jmaa.2018.02.016

\bibitem{Kos-FCAA}
E.D. Kosov,
\emph{On fractional regularity of distributions of functions in Gaussian random variables},
Fract. Calc. Appl. Anal., {\bf 22} (2019), 1249--1268,
https://doi.org/10.1515/fca-2019-0066

\bibitem{Kos-MS}
E.D. Kosov,
\emph{Besov classes on finite and infinite dimensional spaces},
Sb. Math., {\bf210}:5 (2019), 663--692,
https://doi.org/10.1070/SM9058

\bibitem{Kos-IMRN}
E.D. Kosov,
\emph{Total variation distance estimates via $L^2$-norm for polynomials in log-concave random vectors},
Int. Math. Res. Not. IMRN, {\bf 2021}:21 (2021),  16492--16508,
https://doi.org/10.1093/imrn/rnz278

\bibitem{Kos-Adv}
E.D. Kosov,
\emph{Regularity of linear and polynomial images of Skorohod differentiable measures},
Adv. Math., {\bf397} (2022), 108193,
https://doi.org/10.1016/j.aim.2022.108193

\bibitem{Kos-FA}
E.D. Kosov,
\emph{Distributions of polynomials
in Gaussian random variables under
constraints on the powers of variables},
Funct. Anal. Appl.,
{\bf56}:2  (2022), 101--109,
https://doi.org/10.1134/S0016266322020034

\bibitem{MOO}
E. Mossel, R. O'Donnell and K. Oleszkiewicz,
\emph{Noise stability of functions with low influences: variance and optimality},
Ann. of Math. (2), {\bf171} (2010) 295--341,
http://doi.org/10.4007/annals.2010.171.295

\bibitem{NPR10}
I. Nourdin, G. Peccati and G. Reinert,
\emph{Invariance principles for homogeneous sums: universality of Gaussian Wiener chaos},
Ann. Probab., {\bf38}:5 (2010),
https://doi.org/10.1214/10-AOP531

\bibitem{NNP}
I. Nourdin, D. Nualart and G. Poly,
\emph{Absolute continuity and convergence of densities for random vectors
on Wiener chaos},
Electron. J. Probab., {\bf18}:22 (2013), 1--19,
https://doi.org/10.1214/EJP.v18-2181

\bibitem{NPec09}
I. Nourdin and G. Peccati,
\emph{Stein’s method on Wiener chaos}, Probab. Theory Related Fields, 
{\bf145}:1 (2009), 75--118,
https://doi.org/10.1007/s00440-008-0162-x

\bibitem{NPec12}
I. Nourdin and G. Peccati,
\emph{Normal approximations with Malliavin calculus: from Stein's method to universality} (Vol. 192), Cambridge University Press (2012).

\bibitem{NP13}
I. Nourdin and G. Poly,
\emph{Convergence in total variation on Wiener chaos},
Stochastic Process. Appl., {\bf123}:2 (2013), 651--674,
https://doi.org/10.1016/j.spa.2012.10.004

\bibitem{NP15}
I. Nourdin and G. Poly,
\emph{An invariance principle under the total variation distance}, Stochastic Process. Appl., {\bf125}:6 (2015), 2190--2205,
https://doi.org/10.1016/j.spa.2014.12.010

\bibitem{Num78}
E. Nummelin,
\emph{A splitting technique for Harris recurrent Markov chains},
Z. Wahrsch. verw. Gebiete, {\bf43}:4 (1978), 309--318,
https://doi.org/10.1007/BF00534764

\bibitem{P52}
Yu.V. Prohorov,
\emph{A local limit theorem for densities},
Dokl. Akad. Nauk {\bf 83} (1952), 797--800. In Russian.

\end{thebibliography}
\end{document}